\newcommand{\Z}{\mathbb Z}
\newcommand{\R}{\mathbb R}
\newcommand{\C}{\mathbb C}
\newtheorem{thm}{Theorem}[section]
\newtheorem{lem}[thm]{Lemma}
\newtheorem{prop}[thm]{Proposition}
\newtheorem*{thma}{Theorem A}
\newtheorem*{thmb1}{Theorem B1}
\newtheorem*{thmb2}{Theorem B2}
\newtheorem*{thmc}{Theorem C}
\newtheorem*{thmd}{Theorem D}
\DeclareMathOperator{\QD}{\textrm{QD}}
\theoremstyle{definition}
\newtheorem{defn}[thm]{Definition}
\newtheorem{remark}[thm]{Remark}
\begin{document}
\title[Unstable minimal surfaces in $\mathbb{R}^n$ and in products of hyperbolic surfaces]{Unstable minimal surfaces in $\mathbb{R}^n$ and in products of hyperbolic surfaces}
\author[Vladimir Markovi{\'c}]{Vladimir Markovi{\'c}}
\address{Vladimir Markovi{\'c}: University of Oxford, All Souls College, Oxford, OX1 4AL, UK.} \email{markovic@maths.ox.ac.uk} 
\author[Nathaniel Sagman]{Nathaniel Sagman}
\address{Nathaniel Sagman: Caltech, 1200 E California Blvd, Pasadena, CA, 91125, USA.} \email{nsagman@caltech.edu}
\author[Peter Smillie]{Peter Smillie}
\address{Peter Smillie: Caltech, 1200 E California Blvd, Pasadena, CA, 91125, USA.} \email{smillie@caltech.edu}

\begin{abstract}
We prove that every unstable equivariant minimal surface in $\mathbb{R}^n$ produces a maximal representation of a surface group into $\prod_{i=1}^n\textrm{PSL}(2,\mathbb{R})$ together with an unstable minimal surface in the corresponding product of closed hyperbolic surfaces. To do so, we lift the surface in $\R^n$ to a surface in a product of $\mathbb{R}$-trees, then deform to a surface in a product of closed hyperbolic surfaces. We show that instability in one context implies instability in the other two.
\end{abstract}
\maketitle
\begin{section}{Introduction}

\begin{subsection}{Minimal surfaces in products of hyperbolic surfaces}
Let $\Sigma_g$ denote a closed surface of genus $g\geq 2$ and let $\mathbf{T}_g$ be the Teichm{\"u}ller space of marked complex structures on $\Sigma_g$. Let $(X,d)$ be the hyperbolic plane, an $\mathbb{R}$-tree, or product thereof with an action $\sigma: \pi_1(\Sigma_g) \to \mathrm{Isom}(X,d)$. For every Riemann surface structure $S$ on $\Sigma_g$, with universal cover $\tilde{S}$, and $\sigma$-equivariant Lipschitz map $f:\tilde{S}\to (X,d),$ there is a well-defined notion of Dirichlet energy $\mathcal{E}(S,f)$ (see Section 2 for details). For admissible $\sigma$, there is an essentially unique $\sigma$-equivariant harmonic map $h: \tilde{S} \to (X,d)$, which satisfies $$\mathcal{E}(S,h)=\inf_f \mathcal{E}(S,f).$$ This gives a function $\mathbf{E}_\sigma: \mathbf{T}_g \to \R$, by $\mathbf{E}_\sigma(S)=\mathcal{E}(S,h)$. When $S$ is a critical point of $\mathbf{E}_\sigma$, we say that $h$ is minimal; if $X$ is a manifold and $h$ is an immersion, this is equivalent to $h(S)$ being a minimal surface.

One case of interest is when $\sigma$ is a product of Fuchsian representations into $\textrm{PSL}(2,\mathbb{R})^n$ (also called a maximal representation), in which case each component of the harmonic map is a diffeomorphism, and critical points correspond to genuine minimal surfaces in a product of hyperbolic surfaces. The work of Schoen-Yau \cite{SYm} implies that in this case, $\mathbf{E}_\sigma$ is proper, and therefore admits a global minimum, which is a stable critical point. For $n=2$, Schoen proved that this is the unique critical point of $\mathbf{E}_\sigma$ \cite{Sc}. 

However, the first author proved in \cite{M2} that uniqueness fails when $n\geq 3$, assuming the genus $g$ is large enough. See also the paper \cite{M1}, which provides a strengthening of Schoen's result for $n=2$. The main goal of this paper is to show that unstable equivariant minimal surfaces in $\R^n$ yield unstable minimal surfaces in products of hyperbolic surfaces. In particular, this strengthens the result from \cite{M2}, while providing a simpler and more revealing proof. 
When $n\geq 3$, there are many unstable equivariant minimal surfaces in $\mathbb{R}^n;$ most notably, unstable minimal surfaces in tori, which Meeks \cite{Mee}, Hass-Pitts-Rubenstein \cite{HPR}, and Traizet \cite{Tr} have shown to be abundant, lift to unstable equivariant minimal surfaces in $\mathbb{R}^n$.

We say that a critical point of $\mathbf{E}_\sigma$ is unstable if there exists a $C^2$ path in $\mathbf{T}_g$ starting at the point and at which the second derivative of $\mathbf{E}_\sigma$ along the path is negative.
\begin{thma}
Let $n\geq 3.$ For every genus $g\geq 2$, there exists a maximal representation $\sigma:\pi_1(\Sigma_g)\to \prod_{i=1}^n\textrm{PSL}(2,\mathbb{R})$ such that $\mathbf{E}_\sigma:\mathbf{T}_g\to (0,\infty)$ admits an unstable critical point. In particular, there are at least two minimal surfaces in the product of hyperbolic surfaces determined by $\sigma.$
\end{thma}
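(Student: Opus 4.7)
The plan follows the framework articulated in the abstract: produce $\sigma$ by starting from an unstable equivariant minimal surface in $\mathbb{R}^n$, lifting it to a product of $\R$-trees, and then deforming to a product of hyperbolic planes, arguing at each stage that instability persists. The initial data is supplied by the work of Meeks, Hass-Pitts-Rubenstein, and Traizet cited above: for every $n\ge 3$ and every suitable $g\ge 2$, they produce unstable minimal surfaces in flat $n$-tori, and passing to the universal cover yields an unstable equivariant minimal map $u=(u_1,\dots,u_n):\tilde S\to\mathbb{R}^n$ for some Riemann surface structure $S$ on $\Sigma_g$ and some translation action $\sigma_{\mathbb R^n}:\pi_1(\Sigma_g)\to\mathbb{R}^n$.

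First I would promote $u$ to a harmonic map into a product of $\R$-trees. Each component $u_i$ has Hopf differential $\phi_i=(\partial u_i)^2\,dz^2$, a holomorphic quadratic differential on $S$, and the minimality of $u$ is equivalent to $\sum_i\phi_i=0$. By the standard correspondence between quadratic differentials, measured foliations, and dual $\R$-trees, each $\phi_i$ determines an $\R$-tree $T_i$ carrying an isometric $\pi_1(\Sigma_g)$-action $\rho_i^0$, and the leaf-space projection $h_i:\tilde S\to T_i$ is a $\rho_i^0$-equivariant harmonic map with Hopf differential $\phi_i$. The product $h_0:\tilde S\to T_1\times\cdots\times T_n$ therefore has vanishing total Hopf differential, so $S$ is a critical point of $\mathbf{E}_{\sigma_0}$ for the product action $\sigma_0=(\rho_1^0,\dots,\rho_n^0)$.

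Next I would deform $\sigma_0$ through a family $\sigma_t=(\rho_1^t,\dots,\rho_n^t)$ of maximal representations approximating it, using the fact that each $T_i$ is a limit, on the Thurston boundary, of a family of Fuchsian representations $\rho_i^t$. For small $t$, I expect the energies $\mathbf{E}_{\sigma_t}$ to depend $C^2$-continuously on $t$ on a fixed neighborhood of $S$ in $\mathbf{T}_g$, so that the implicit function theorem produces critical points $S_t\to S$ with Hessians converging to that of $\mathbf{E}_{\sigma_0}$ at $S$. If the latter Hessian is negative along some direction in the tangent space to $\mathbf{T}_g$ at $S$, the same will hold for $\mathbf{E}_{\sigma_t}$ at $S_t$ for $t$ small, and one sets $\sigma=\sigma_t$.

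The principal obstacles are the two instability-transfer assertions. For the first, one must match the second variation of energy in $\mathbb{R}^n$ with that in the product of $\R$-trees; the first-order data (the Hopf differentials) agree by construction, but comparing Hessians requires a careful analysis of how the harmonic maps deform with the complex structure in each target. For the second, one needs $C^2$-continuity of $\mathbf{E}_{\sigma_t}$ as $\sigma_t$ degenerates to $\sigma_0$ across the Thurston boundary, which is a delicate convergence statement for families of harmonic maps with highly singular limiting target. I would expect the technical bulk of the argument to lie in these two comparisons; once they are in place, the three-stage transfer of instability follows directly.
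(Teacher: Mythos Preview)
Your outline captures the three-stage architecture, but there are two substantive problems.

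\textbf{The genus $2$ case is a genuine gap.} The works of Meeks, Hass--Pitts--Rubenstein, and Traizet produce unstable minimal surfaces of genus $g$ in flat $3$-tori only for $g\ge 3$. In fact there are \emph{no} non-flat equivariant minimal surfaces of genus $2$ in $\mathbb{R}^3$: three abelian differentials on a genus-$2$ surface whose squares sum to zero are forced to be colinear (the canonical image lies in $\mathbb{P}^1$, so it cannot sit on a rank-$3$ quadric). The paper handles $g=2$ by a different route: one works with quadratic differentials $\phi_1,\phi_2,\phi_3$ that are not squares but whose product $\phi_1\phi_2\phi_3$ is the square of a cubic differential, passes to a spectral branched cover to obtain a minimal surface in $\mathbb{R}^3$, and shows that the normal bundle is $(\mathbb{Z}/2\mathbb{Z})^3$-equivariantly trivial so that a constant normal section is still destabilizing (Theorem D and Proposition \ref{g=2}). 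Your proposal as written cannot reach $g=2$.

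\textbf{The tree-to-hyperbolic step is set up in a harder way than necessary, and rests on an unknown regularity.} You propose to approximate the tree action $\sigma_0$ by Fuchsian $\sigma_t$, invoke $C^2$ convergence of $\mathbf{E}_{\sigma_t}$ to $\mathbf{E}_{\sigma_0}$, and use the implicit function theorem to track a moving critical point $S_t$. But it is not known that $\mathbf{E}_{\sigma_0}$ is even $C^2$ (see Remark 1.2), so the Hessian you want to perturb from may not exist. The paper sidesteps this entirely. The approximating Fuchsian representations are chosen so that the harmonic map $S\to M_i^t$ has Hopf differential $t\phi_i$; since $\sum_i t\phi_i=0$, the \emph{same} Riemann surface $S$ is a critical point of $\mathbf{E}_\rho^t$ for every $t>0$, and no implicit function theorem is needed. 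The comparison of second variations is then carried out not through the Hessian of $\mathbf{E}_\rho$ on $\mathbf{T}_g$, but through an auxiliary quadratic form $\mathbf{L}$ on $T_S\mathbf{T}_g\times H^0(S,TS)^n$ (the ``self-maps index''), computed from the two-variable energy via the Reich--Strebel formula. A monotonicity of the rescaled energy density $e(h_t)/t$ (Lemma \ref{growth}) gives $\mathbf{L}_t/t\searrow \mathbf{L}$ pointwise, which immediately transfers negativity. This is both simpler and avoids the regularity issue you flagged as a ``principal obstacle.'' The same self-maps device is what makes the $\mathbb{R}^n$-to-tree comparison go through (via the folding map $F:X\to\mathbb{R}^n$ of Lemma \ref{isomf}), which your sketch leaves unspecified.
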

Labourie conjectured that for a Hitchin representation into a simple split real Lie group $G$ of non-compact type, there exists a unique equivariant minimal surface in the corresponding symmetric space. Labourie proves existence in general \cite{La1}, and that uniqueness holds when the rank of $G$ is $2$ \cite{La} (see also \cite{CTT}, where Collier-Tholozan-Toulisse prove the analogous statement for maximal representations into Hermitian Lie groups of rank $2$). The conjecture remains open in rank at least $3$, and \cite{M2} suggests that this is the critical case.

The key idea of the proof of Theorem A is to reduce it to finding unstable minimal surfaces in products of $\mathbb{R}$-trees (Theorem B2 below). The unstable minimal surfaces are provided by Theorems C and D. We explain in the forthcoming subsections.
\end{subsection}

\begin{subsection}{Minimal surfaces in products of $\mathbb{R}$-trees} \label{sec: Rtrees}
We give the definitions about harmonic maps to $\mathbb{R}$-trees in Section \ref{2.2}. Throughout the paper, let $S$ be a Riemann surface structure on $\Sigma_g$ and $\QD(S)$ the space of holomorphic quadratic differentials on $S$. The Riemann surface structure $S$ lifts to a Riemann surface structure on the universal cover of $\Sigma_g$, which we denote $\tilde{S}$. Given a non-zero $\phi\in \QD(S)$, there are two natural ways of producing an equivariant harmonic map. First, the leaf space of the vertical singular foliation of the lift $\tilde{\phi}$ to $\tilde{S}$ is an $\R$-tree $(T_\phi,d)$. The action of $\pi_1(\Sigma_g)$ on $\tilde{S}$ descends to an action $\rho: \pi_1(\Sigma_g) \to \mathrm{Isom}(T_\phi,d)$ by isometries. The quotient map $\pi: \tilde{S}\to (T_\phi,d)$ is harmonic and $\rho$-equivariant, with Hopf differential $\phi/4$. 

On the other hand, it is proved independently by Hitchin \cite{Hi}, Wan \cite{Wan}, and Wolf \cite{W2} that there is a unique hyperbolic structure $M_\phi$ on $\Sigma_g$ such that the identity map from $S$ to $M_\phi$ is harmonic with Hopf differential $\phi$. Moreover Wolf shows that as $t \to \infty$, $M_{t\phi}$ converges in a certain sense to the rescaled tree $(T_\phi, 2d)$ (see \cite{W} for the precise statement).

Now let $\phi_1, \ldots, \phi_n$ be $n$ nonzero holomorphic quadratic differentials on the same surface $S$, and let $X$ be the product of the $\mathbb{R}$-trees $(T_{\phi_i},2d_i)$ arising from the construction above. Let $\rho:\pi_1(\Sigma_g)\to \mathrm{Isom}(X)$ be the product of the actions $\rho_i$ on each factor. The energy function $\mathbf{E}_\rho$ on $\mathbf{T}_g$ associated to $\rho$ is then the sum of the energy functions $E_{\rho_i}$ associated to each component. Also for each positive $t > 0$, let $M_i^t$ be the hyperbolic structures associated to $t\phi_i$. We set $\mathbf{E}_{\rho}^t$ to be the energy functional for the product of Fuchsian representations associated to the $M_i^t$.

$S$ is a critical point for $\mathbf{E}_{\rho}$ if and only if it is a critical point for $\mathbf{E}_{\rho}^t$ for every $t>0$. In other words, minimality of the harmonic map into the product of surfaces is equivalent to the minimality of the equivariant harmonic map into the product of $\mathbb{R}$-trees. The condition occurs precisely when $\sum_{i=1}^n \phi_i=0.$ 

Let $n \geq 2$. For $i=1,\dots, n$, let $\phi_i$ be nonzero holomorphic quadratic differentials on the Riemann surface $S$ such that $\sum_{i=1}^n \phi_i=0.$
\begin{thmb1}
$S$ is not a (local) minimum for $\mathbf{E}_\rho$ if and only if there exists $t>0$ such that $S$ is not a (local) minimum for $\mathbf{E}_\rho^t$. In this case, for all $s>t$, $S$ is not a (local) minimum for $\mathbf{E}_\rho^s$.
\end{thmb1}
\begin{remark}
If $n=2$, Schoen's result shows that the only critical point of $\mathbf{E}^t_\rho$ is a minimum, and so by (1), the same is true of $\mathbf{E}_\rho$. This was first proved by Wentworth who showed that, provided existence, the equivariant minimal surface in a product of two $\mathbb{R}$-trees is unique \cite[Theorem 1.6]{We}.
\end{remark}
\begin{remark}
It appears to be unknown whether the energy functional on Teichm{\"u}ller space for harmonic maps to $\mathbb{R}$-trees is $C^2$. It is always $C^1,$ and real analytic near a Riemann surface such that the Hopf differential of the harmonic map has only simple zeros (this is a generic condition) \cite{Ma}.
\end{remark}
Theorem B1 can give critical points of $\mathbf{E}_\rho^t$ that are not minima, but this is not quite strong enough to prove Theorem A, which is about unstable critical points. To that end, we give a notion of instability in products of $\mathbb{R}$-trees that will be suitable for our purposes. Let $S$ be a critical point for $\mathbf{E}_\rho$ with harmonic map $\pi=(\pi_1,\dots,\pi_n)$. Given $C^\infty$ vector fields $V_1,\dots, V_n$ on $S$, let $r\mapsto f_1^r,\dots, f_n^r:S\to S$ be their flows, and construct the map $\pi_r=(\pi_1\circ f_1^r,\dots, \pi_n\circ f_n^r)$. For any $C^\infty$ path of Riemann surfaces $r\mapsto S_r,$ there is a Beltrami form $\mu$ representing a point $T_S\mathbf{T}_g$ that is tangent to our path at $r=0$.

\begin{defn}
We define a quadratic form $\mathbf{L}: T_S\mathbf{T}_g\times H^0(S,TS)^n\to\mathbb{R}$ by $$\mathbf{L}(\mu,V_1,\dots, V_n) = \frac{d^2}{dr^2}|_{r=0}\mathcal{E}(S_r,\pi_r),$$ where $r\mapsto S_r$ is any path tangent to $\mu$ at $r=0$.

The self-maps index of $S$ for $\mathbf{E}_\rho$ is the maximal dimension of $T_S\mathbf{T}_g\times H^0(S,TS)^n$ on which $L$ is negative definitef. If the index is positive, we say that $S$ is unstable.
\end{defn}

We explain that $\mathbf{L}$ is well-defined in Section 2.2. $\mathbf{L}$ is positive semi-definite on $\{0\}\times H^0(S,TS)^n$, and hence if $\mathbf{L}$ is negative definite on a subspace $U\subset T_S\mathbf{T}_g\times H^0(S,TS)^n$, then $U$ projects injectively to $T_S\mathbf{T}_g\times\{0\}$. Moreover, for any variations $r\mapsto S_r$ and  $r\mapsto\pi_r,$ we have $\mathbf{E}_\rho(S_r)\leq \mathcal{E}(S_r,\pi_r),$ and hence if $\mathbf{L}(\mu,V_1,\dots,V_n)<0,$ then $\mathbf{E}_\rho(S_r)<\mathbf{E}_\rho(S)$ for small $r$. See Remark \ref{twoven} below for more motivation for the definition of $\mathbf{L}$.

\begin{thmb2}
The index of $\mathbf{E}_{\rho}^t$ at $S$ is non-decreasing with $t$, and converges to the self-maps index of $S$ for $\mathbf{E}_\rho$ as $t\to\infty.$ Consequently, $S$ is unstable for $\mathbf{E}_\rho$ if and only if it is unstable for $\mathbf{E}_\rho^t$, for $t$ sufficiently large.
\end{thmb2}

Toward the proof of Theorem A, we only need the ``only if" direction of Theorem B2. We include the ``if" direction and Theorem B1 because they show that $\mathbb{R}$-trees are really at the heart of the result. A conjecture in Higher Teichm{\"u}ller theory is that high energy minimal maps into symmetric spaces converge in an appropriate sense to minimal maps into buildings (see \cite{KPPS}). This is the higher rank generalization of \cite{W}. If our results extend to this setting, then this would suggest that any counterexample to the Labourie conjecture would have to come from an unstable minimal map into a building.

\end{subsection}
\begin{subsection}{Equivariant minimal surfaces in $\mathbb{R}^n$}
In order to use Theorem B2 to prove Theorem A, we construct unstable surfaces in products of $\mathbb{R}$-trees. We start by looking in a more familiar place: Euclidean space $\mathbb{R}^n$.

For $i=1,\dots, n$, let $\alpha_i$ be a non-zero holomorphic $1$-form on the Riemann surface $S$. Lifting to $1$-forms $\tilde{\alpha}_i$ on a universal cover $\tilde{S}$ gives the data of a harmonic map to $\mathbb{R}^n$ via integrating the real parts: $$h= (h_1,\dots, h_n), \hspace{1mm} h_i(z) = \textrm{Re}\int_{z_0}^z \tilde{\alpha_i},$$ unique up to translation. The map $h$ intertwines the action of $\pi_1(\Sigma_g)$ on $\tilde{S}$ with some non-trivial homomorphism $\chi: \pi_1(\Sigma_g)\to \mathbb{R}^n$. 
The Hopf differential of $h_i$ is the square $\phi_i = \alpha_i^2$, which descends to a holomorphic quadratic differential on $S$, by the equivariance property. $h$ is weakly conformal if and only if $\sum_{i=1}^n \phi_i=0,$ which is equivalent to $h$ being minimal. 

By the construction of Section \ref{sec: Rtrees}, the Hopf differentials $\phi_i$ also define an action $\rho$ of $\pi_1(\Sigma_g)$ on a product $X$ of $\mathbb{R}$-trees and a $\rho$-equivariant minimal map $\pi$. The map $h$ naturally factors through $\pi$. Let $\mathbf{E}_\chi$ and $\mathbf{E}_\rho$ be the corresponding energy functionals on Teichm\"uller space. In the end we prove the following near-equivalence.

\begin{thmc}
For $n \geq 2$ and $i=1,\dots, n$, let $\alpha_i$ be nonzero holomorphic $1$-forms on $S$ such that $\sum_{i=1}^n \alpha_i^2=0,$. Let $\rho$, and $\chi$ be as above.
\begin{enumerate}
    \item If $S$ is not a (local) minimum for $\mathbf{E}_\rho$, then it is not a (local) minimum for $\mathbf{E}_\chi$.
    \item The index of $\mathbf{E}_{\chi}$ at $S$ is equal to the self-maps index of $\mathbf{E}_\rho$ for $S$. In particular, if $S$ is unstable for $\mathbf{E}_{\chi}$, then $S$ is unstable for $\mathbf{E}_\rho.$ 
\end{enumerate}
\end{thmc}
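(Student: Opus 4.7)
My plan hinges on a single energy identity. Each component $h_i$ factors through the tree via $h_i = q_i\circ \pi_i$, where $q_i\colon (T_{\phi_i},2d_i)\to\mathbb{R}$ is $1$-Lipschitz and is a local isometry outside the branch points of $T_{\phi_i}$. Because the branch points are the image of the measure-zero zero set of $\phi_i$ in $\tilde S$, the pointwise energy densities of a tree-valued map and its composition with $q_i$ agree away from a measure-zero set, giving
\[ \mathcal{E}(S',\pi') \;=\; \mathcal{E}(S',q\circ\pi') \]
for any equivariant map $\pi'\colon\tilde{S}'\to X$ and any Riemann surface structure $S'$. Applied to the $S'$-harmonic map, and using that $q\circ\pi'$ is $\chi$-equivariant, this yields $\mathbf{E}_\chi(S')\leq \mathbf{E}_\rho(S')$, with equality at $S$. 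Part (1) is then immediate: any $S'$ strictly decreasing $\mathbf{E}_\rho$ strictly decreases $\mathbf{E}_\chi$.

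For (2), specializing the identity to $\pi_r=(\pi_i\circ f_i^r)$ gives $\mathcal{E}(S_r,\pi_r)=\mathcal{E}(S_r,(h_i\circ f_i^r))$, so $\mathbf{L}$ equals the analogous self-maps form $\mathbf{L}_\chi$ for $\mathbf{E}_\chi$, and (2) reduces to showing that the index of $\mathbf{L}_\chi$ equals the Morse index of $\mathbf{E}_\chi$. Because $\mathcal{E}$ is quadratic on $\mathbb{R}^n$-valued maps, the $S_r$-harmonic map $h_r$ minimizes $\mathcal{E}(S_r,\cdot)$, and $(h\circ f^r)-h_r$ is $\pi_1$-invariant, integration by parts against $h_r$ kills the cross term, yielding
\[ \mathbf{L}_\chi(\mu,V) \;=\; \text{Hess}\,\mathbf{E}_\chi(\mu,\mu) \;+\; \bigl\|\, d\bigl(dh(V)-\dot h_\mu\bigr)\, \bigr\|_{L^2(S)}^{\,2}, \]
where $\dot h_\mu=\frac{d}{dr}\big|_{r=0} h_r$ and $dh(V)$ denotes the $\mathbb{R}^n$-valued function with components $dh_i(V_i)$. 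The nonnegative excess term immediately shows $\mathbf{L}_\chi$-index $\leq$ Morse-index, as any negative-definite subspace of $\mathbf{L}_\chi$ projects injectively to one of $\text{Hess}\,\mathbf{E}_\chi$.

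The reverse inequality is the main obstacle. Given a basis $\mu_1,\dots,\mu_k$ of a negative-definite subspace for $\text{Hess}\,\mathbf{E}_\chi$, I will construct smooth tuples $V^i_\epsilon=(V^i_{\epsilon,1},\dots,V^i_{\epsilon,n})$ with $\|d(dh(V^i_\epsilon)-\dot h_{\mu_i})\|_{L^2}^2\to 0$ as $\epsilon\to 0$; a Cauchy--Schwarz estimate in the quadratic form then forces $\mathbf{L}_\chi$ to be negative definite on $\text{span}\{(\mu_i,V^i_\epsilon)\}$ for $\epsilon$ small. Away from the zeros of each $\alpha_j$, I take $V^i_{\epsilon,j}$ orthogonal to $\ker dh_j$ solving $dh_j(V^i_{\epsilon,j})=\dot h^{(j)}_{\mu_i}$; because $\dot h^{(j)}_{\mu_i}$ generically does not vanish at the zeros $p$ of $\alpha_j$, this field blows up like $1/|z-p|$ near each such $p$, so I multiply by a logarithmic cutoff $\phi_\epsilon(|z-p|)=\log(R/|z-p|)/\log(R/\epsilon)$ to make it $C^\infty$. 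The scheme works thanks to the two-dimensional scaling $\|d\phi_\epsilon\|_{L^2}^2=2\pi/\log(R/\epsilon)\to 0$, and choosing $R$ to shrink appropriately as $\epsilon \to 0$ controls the cutoff-annulus contribution to the discrepancy. This critical Sobolev behavior is essential, since no globally smooth $V$ can solve $dh_j(V_j)=\dot h^{(j)}_\mu$ pointwise when $dh_j$ vanishes while $\dot h^{(j)}_\mu$ does not.
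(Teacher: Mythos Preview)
Your proof is correct, and part~(1) is essentially the paper's argument: both use the folding map $q_i=F_i\colon T_{\phi_i}\to\mathbb{R}$ and the pointwise equality of energy densities away from the (measure-zero pre-image of the) vertex set to obtain $\mathbf{E}_\chi\leq\mathbf{E}_\rho$ with equality at $S$.

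For part~(2) you take a genuinely different route. The paper passes through the \emph{area} functional: it identifies the Hessian of the two-variable energy on the subspace of normal variations supported on $S\setminus B$ with the classical second variation of area $Q(W)=\int |(dW)^N|^2-|\langle k,W\rangle|^2$, then converts a destabilizing normal field $W$ into self-maps via $V_i=W^i\nabla x^i/|\nabla x^i|^2$ (Lemma~\ref{liftingtt}), using the log cutoff to restrict support away from $B$. You instead exploit the \emph{quadratic} structure of the energy on $\mathbb{R}^n$-valued maps: the Hodge-orthogonality of harmonic and exact $1$-forms gives the exact splitting
\[
\mathbf{L}_\chi(\mu,V)=\mathrm{Hess}\,\mathbf{E}_\chi(\mu,\mu)+\bigl\|d\bigl(dh(V)-\dot h_\mu\bigr)\bigr\|_{L^2}^2,
\]
and then you solve $dh_j(V_j)=\dot h^{(j)}_\mu$ away from the zeros of $\alpha_j$, cutting off logarithmically. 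Both arguments hinge on the same log-cutoff phenomenon at the branch locus, but yours is more self-contained (no second fundamental form, no appeal to the area--energy index equality), while the paper's approach makes direct contact with classical minimal-surface theory and is what later allows the constant normal section to be identified as the destabilizing direction in Section~5.

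One minor correction: your cutoff $\phi_\epsilon(r)=\log(R/r)/\log(R/\epsilon)$ equals $1$ at $r=\epsilon$ and $0$ at $r=R$, which is the wrong way around---multiplying a field that blows up at $p$ by this does not tame it. You want $\phi_\epsilon(r)=\log(r/\epsilon)/\log(R/\epsilon)$, extended by $0$ on $\{r\leq\epsilon\}$ and $1$ on $\{r\geq R\}$; the energy estimate $\|d\phi_\epsilon\|_{L^2}^2=2\pi/\log(R/\epsilon)$ is unchanged, and your choice $R\to 0$ with $R/\epsilon\to\infty$ then controls both error terms exactly as you describe.
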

\begin{remark}
As in Theorem B, the statement is not so interesting when $n=2$ since every critical point is a stable minimum.
\end{remark}

\begin{remark} \label{rmk: area/energy}
Instability for $\mathbf{E}_\chi$ at $S$ is equivalent to instability for the (equivariant) area functional on the space of all equivariant maps. The second variations for both functionals have the same index (see \cite[Theorem 3.4]{Ej}).
\end{remark}

The final ingredient needed to prove Theorem A is an example of an unstable equivariant minimal surface in $\mathbb{R}^n$. Fortunately, these aren't so hard to find: when $n=3,$ every non-planar equivariant minimal surface in $\mathbb{R}^3$ is unstable, since a constant section of the normal bundle is destabilizing. Consequently, for any three $1$-forms on $S$ whose squares sum to zero, as long as they span a 2-dimensional space, $S$ will be an unstable point of $\mathbf{E}_\chi$ (we explain the details in Section \ref{5.3}).

The most natural example is the lift of a minimal surface in a flat $3$-torus; there a many classical examples, such as the Schwarz P-surface of genus $3$ (see \cite{Mee}). In fact, for every $g\geq 3$, every flat $3$-torus contains infinitely many distinct unstable minimal surfaces of genus $g$ in the same homotopy class (see \cite{Mee}, \cite{HPR}, and \cite{Tr}). 

By inclusion, this gives examples for every $n\geq 3$, as long as $g \geq 3$. We can also perturb these examples to give even more. Unfortunately, if $g=2$, then the only triples of 1-forms whose squares sum to zero are scalar multiples of one another, so we cannot use Theorem C to prove Theorem A in the case $g=2$.
\end{subsection}

\begin{subsection}{A generalization of Theorem C, and the case $g=2$}
In the general setting where we start with $n$ quadratic differentials summing to zero that are not necessarily squares of abelian differentials, we may have to lift to a branched covering of $S$ in order to get a harmonic map to $\R^n$. Replacing $\pi_1(\Sigma)$ with the Deck group of this branched covering, and $\chi$ with the corresponding representation of this group, we prove that an analog of statement (2) from Theorem C still holds.

When $n=3$, the minimal surface in $\R^3$ arising from a branched cover is no longer automatically equivariantly unstable for its energy functional, because the normal bundle is not necessarily equivariantly trivial with respect to the action of the Deck group of the branched covering. Instead, we find a condition on the quadratic differentials that guarantees that the bundle is equivariantly trivial, which then yields the following theorem.

 \begin{thmd}
 Let $\phi_1,\phi_2,\phi_3$ be holomorphic quadratic differentials on $S$ that are not colinear and such that $\phi_1\phi_2\phi_3$ is the square of a cubic differential. Then the corresponding equivariant minimal surface in the product of $\mathbb{R}$-trees is unstable via self-maps for its energy functional.
 \end{thmd}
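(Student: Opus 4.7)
The plan is to follow the pipeline laid out in the preceding subsection: pass to a branched cover of $S$ on which the $\phi_i$ become squares of abelian differentials, assemble these into an equivariant minimal surface in $\mathbb{R}^3$, and invoke the announced generalization of Theorem~C(2) to transfer instability in $\mathbb{R}^3$ back to the original surface in the product of $\mathbb{R}$-trees.

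Concretely, I would let $p\colon \hat S \to S$ be the branched cover corresponding to the kernel of the homomorphism $\pi_1(S) \to (\mathbb{Z}/2)^3$ that records the monodromies of $\sqrt{\phi_1},\sqrt{\phi_2},\sqrt{\phi_3}$, ramified at the odd-order zeros of the $\phi_i$. Upstairs each $p^*\phi_i$ is the square of a holomorphic $1$-form $\hat\alpha_i$, and since $\sum_i \hat\alpha_i^2=0$ the map $\hat h\colon \widetilde{\hat S}\to\mathbb{R}^3$ obtained by integrating $(\hat\alpha_1,\hat\alpha_2,\hat\alpha_3)$ is weakly conformal, hence minimal. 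It is equivariant for a representation $\chi$ of an extension $\Gamma$ of the Deck group $G\subset (\mathbb{Z}/2)^3$ by $\pi_1(\hat S)$; the linear part of $\chi$ acts on $\mathbb{R}^3$ by the coordinatewise signs $\epsilon(g)=(\epsilon_1(g),\epsilon_2(g),\epsilon_3(g))$ through which the $\hat\alpha_i$ transform under Deck transformations.

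The hypothesis that $\phi_1\phi_2\phi_3$ is the square of a cubic differential is precisely the statement that $\hat\alpha_1\hat\alpha_2\hat\alpha_3$ descends to $S$, equivalently that $\epsilon_1\epsilon_2\epsilon_3\equiv 1$ on $G$. This places the linear action of $G$ in $\mathrm{SO}(3)$, and because the Gauss map of a conformal minimal immersion into $\mathbb{R}^3$ is $\mathrm{SO}(3)$-equivariant, the unit normal $N$ of $\hat h$ is a $\Gamma$-equivariant section of the normal bundle, giving an equivariant trivialization. The constant normal variation $t\mapsto \hat h+tN$ is therefore admissible, and corresponds to the constant Jacobi function $u\equiv 1$ in this trivialization. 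The familiar second-variation formula in $\mathbb{R}^3$,
\[
Q(u)=\int_{S_0}\bigl(|\nabla u|^2-|A|^2u^2\bigr)\,dA,
\]
evaluated at $u\equiv 1$ over a fundamental domain $S_0$, becomes $-\int_{S_0}|A|^2\,dA$.

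To finish, I need $\hat h$ to be non-planar, so that $|A|\not\equiv 0$. A planar image would produce a nontrivial real relation $\sum c_i\hat\alpha_i=0$. Squaring and using $\sum p^*\phi_i=0$, the mixed product $\hat\alpha_i\hat\alpha_j$ carries $G$-character $\epsilon_k$ (by $\epsilon_1\epsilon_2\epsilon_3\equiv 1$, where $\{i,j,k\}=\{1,2,3\}$). If some $\epsilon_k$ is nontrivial on $G$, then $G$-invariance of the resulting identity forces $c_ic_j=0$, so the original relation collapses to a direct proportionality between two of the $\hat\alpha$'s, and hence (with $\sum\phi_i=0$) to colinearity of all three $\phi_i$. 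If instead all $\epsilon_k$ are trivial then $G$ is trivial and the $\hat\alpha_i$ already live on $S$; squaring $\sum c_i\alpha_i=0$ and combining with $\sum\alpha_i^2=0$ yields a fixed nontrivial quadratic equation satisfied by the meromorphic ratio $\phi_1/\phi_2$, forcing it to be constant and hence the $\phi_i$ to be colinear. Both alternatives contradict the hypothesis, so $Q(1)<0$, the standard index of $\mathbf{E}_\chi$ at $S$ is positive, and by the announced generalization of Theorem~C(2) so is the self-maps index of $\mathbf{E}_\rho$. The main points I expect to require care are the handling of the branch points of $\hat h$, where $N$ must be interpreted by continuous extension, and the character-decomposition bookkeeping in the non-planarity step, since the mixed products $\hat\alpha_i\hat\alpha_j$ generally do not descend to $S$.
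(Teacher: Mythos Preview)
Your approach is essentially the paper's: pass to a branched cover on which the $\phi_i$ become squares, observe that the condition $\phi_1\phi_2\phi_3=c^2$ forces the deck group to act on $\mathbb{R}^3$ through $\mathrm{SO}(3)$ so that the normal line bundle is equivariantly trivial, and feed the resulting constant unit normal section into the second-variation formula and then into the generalization of Theorem~C(2). The paper works on the full (possibly disconnected) spectral curve rather than your minimal connected cover, and for non-flatness it simply invokes Proposition~\ref{k=0} (non-colinearity of the $\phi_i$ downstairs lifts to non-colinearity of the $\hat\alpha_i^2$, hence $k\not\equiv 0$); your character-by-character argument reaches the same conclusion but is more laborious, and the step ``$G$-invariance forces $c_ic_j=0$'' is not literally correct when $G\cong\mathbb{Z}/2$ (two of the characters $\epsilon_k$ then coincide and their isotypic pieces mix), though the proportionality conclusion is still easily salvaged in that subcase.
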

 
We show that the moduli space of solutions to this problem has dimension at least $3g-3$ for every genus (Proposition \ref{3g-3}). In particular, this gives us unstable minimal surfaces in genus 2, and hence allows us to complete the proof of Theorem A.

\end{subsection}

\begin{subsection}{Acknowledgments}
We would like to thank Phillip Engel and Mike Wolf for helpful discussions. Vladimir Markovi{\'c} is supported by the Simons Investigator Award 409745 from the Simons
Foundation.
\end{subsection}

\end{section}

\begin{section}{Preliminaries}\label{2}
\begin{subsection}{Harmonic maps to manifolds}
Let $\nu$ be a smooth metric on $S$ compatible with the complex structure. Let $(M,\sigma)$ be a closed Riemannian manifold, and $h:S\to M$ a $C^2$ map. The energy density is the function
\begin{equation}\label{en}
e(h) = \frac{1}{2}\textrm{trace}_\nu h^*\sigma,
\end{equation}
and the total energy is
\begin{equation}\label{tot}
    \mathcal{E}(S,h) = \int_S e(h)dA,
\end{equation}
where $dA$ is the area form of $\nu$. We comment here that the energy density $2$-form $e(h)dA$ does not depend on the choice of compatible metric $\nu$, but only on the complex structure. $h$ is said to be harmonic if it is a critical point for the energy $h\mapsto \mathcal{E}(S,h).$ The Hopf differential is the quadratic differential on $S$ defined in a local holomorphic coordinate $z$ by 
\begin{equation} \label{hopf}
\phi(h)(z) = h^*\sigma\Big (\frac{\partial}{\partial z},\frac{\partial}{\partial z}\Big )(z)dz^2,
\end{equation}
and it is holomorphic provided that $h$ is harmonic. 
We also consider maps from $\tilde{S}\to (M,\sigma)$ that are equivariant with respect to a representation $\rho: \tilde{S}\to \textrm{Isom}(M,\nu).$ Since $\rho$ is acting by isometries, the energy density is invariant under the $\pi_1(\Sigma_g)$ action on $\tilde{S}$ by deck transformations, and hence descends to a function on $S$. In this way, we can define a total energy exactly as in (\ref{tot}), and discuss harmonic maps. Similarly, the Hopf differential descends to a holomorphic quadratic differential on $S$.

We give special attention to surfaces and the real line. When the target $(M,\sigma)$ is a surface with conformal metric $\sigma$, then in holomorphic coordinates $z$ on $S$ and $w$ on $M$, we write $\nu=\nu(z)|dz|^2$, $\sigma=\sigma(w)|dw|^2$, and $h$ as a complex-valued function $h(z)$. The energy density takes the form $$e(h)(z) = \frac{\sigma(h(z))}{\nu(z)}(|h_z|^2 + |h_{\overline{z}}|^2)(z), \hspace{1mm} \phi(h)(z) = \sigma(h(z))h_z\overline{(h_{\overline{z}})}dz^2.$$ Considering equivariant maps to the real line, again in local coordinates, $$e(h)(z) = 2\nu(z)^{-1}|h_z|^2, \hspace{1mm} \phi(h)=h_z^2dz^2.$$

For a harmonic map to a product space, the definitions (\ref{en}) and (\ref{hopf}) shows that the energy density and the Hopf differential are the sum of the energy densities and the Hopf differentials respectively of the component maps. So for a mapping $h=(h_1,\dots, h_n)$ into a product of Riemann surfaces, or an equivariant mapping into $\mathbb{R}^n,$ the Hopf differential is the sum
\begin{equation}\label{hsum}
    \phi(h) = \sum_{i=1}^n \phi(h_i).
\end{equation}
$h$ is minimal if $\phi\equiv 0.$ 

If $(M,\sigma)$ is a negatively curved surface, it is well-known that there is a unique harmonic map $h:S\to (M,\sigma)$ in the homotopy class of the identity (see \cite{ES} for existence, and \cite[Theorem H]{Har} for uniqueness). If we work on a different Riemann surface structure $S'$ on $\Sigma_g$, we get a harmonic map from $S'\to (M,\sigma)$ in the class of the identity, and the total energy depends only on the class of $S'$ in Teichm{\"u}ller space. Thus, we get a functional $\mathbf{E}:\mathbf{T}_g\to (0,\infty)$, where $\mathbf{E}(S')$ is the total energy of the harmonic map from $S'\to (M,\sigma).$ For a map into a product of surfaces, the energy functional is the sum of the energy functionals of the component mappings.
\end{subsection}

\begin{subsection}{Harmonic maps to $\mathbb{R}$-trees}\label{2.2}
\begin{defn}
An $\mathbb{R}$-tree is a length space $(T,d)$ such that any two points are connected by a unique arc, and every arc is a geodesic, isometric to a segment in $\mathbb{R}$.
\end{defn}
 A point $x\in T$ is a vertex if the complement $T\backslash\{x\}$ has greater than two components. Otherwise it is said to lie on an edge.

Let $S$ be a closed Riemann surface of genus $g\geq 2$. The horizontal (resp. vertical) foliation of a holomorphic quadratic differential $\phi$ on $S$ is the singular foliation whose non-singular leaves are the integral curves of the line field on $S\backslash \phi^{-1}(0)$ on which $\phi$ is a negative real number. The singularities are standard prongs at the zeros. Both foliations come equipped with transverse measures $|\textrm{Im}\sqrt{\phi}|$ and $|\textrm{Re}\sqrt{\phi}|$ respectively (see \cite[Expos{\'e} 5]{Thbook} for precise definitions).

In this paper, we work with the vertical foliation, unless specified otherwise. Lifting to a singular measured foliation on a universal cover $\tilde{S}$, we define an equivalence relation under which two points $x,y\in\tilde{S}$ are equivalent if they lie on the same leaf. The quotient space is denoted $T$, and we can push the transverse measure down via the projection $\pi: \tilde{S}\to T$ to form a distance function $d$ such that $(T,d)$ is an $\mathbb{R}$-tree, with an induced action $\rho:\pi_1(S)\to \textrm{Isom}(T,d).$

According to Korevaar-Schoen, for Lipschitz maps $f$ from $\tilde{S}$ to complete and non-positively curved (NPC) length spaces such as $(T,d)$, there is a well-defined $L^1$ directional energy tensor $g_{ij}=g_{ij}(f)$ that generalizes the pullback metric (see \cite[Theorem 2.3.2]{KS}). In this way, one can define a measurable energy density function by 
\begin{equation}\label{mease}
    e(f)=\frac{1}{2}\textrm{trace}_\nu g_{ij}(f).
\end{equation}
For an equivariant Lipschitz map $h$, the energy density $e(h)$ is invariant under the group, and we define a total energy as in the smooth setting by $$\mathcal{E}(S,h) = \int_S e(h)dA.$$ 
\begin{defn}
We say that a $\rho$-equivariant map $h: \tilde{S}\to (T,d)$ is harmonic if, among other $\rho$-equivariant maps, it is a critical point for the energy $h\mapsto \mathcal{E}(S,h).$ 
\end{defn}
For the projection map $\pi$, we can describe the energy density explicitly. At a point on $p\in\tilde{S}$ on which $\phi(p)\neq 0$, the map locally isometrically factors through a segment in $\mathbb{R}$. In a small enough neighbourhood around that point, $e(\pi)$ is equal to the energy density of the locally defined map to $\mathbb{R}$, which is computed as usual via the formula (\ref{en}). From this, we see that the energy density has a continuous representative that is equal to $\nu^{-1}|\phi|/2$ everywhere.

The Hopf differential is well-defined for maps $f$ from $\tilde{S}$ to NPC spaces as above: in local coordinates, it is given by 
\begin{equation}\label{mhopf}
\frac{1}{4}(g_{11}(f)(z)-g_{22}(f)(z)-2ig_{12}(f)(z))dz^2.
\end{equation}
The projection map $\pi: \tilde{S}\to (T,d)$ is $\rho$-equivariant and harmonic, with Hopf differential $\phi/4$. Instead of the equation (\ref{mhopf}), one can also see this by using the local isometric factoring. As in the case of maps to surfaces, a harmonic mapping into a product of trees is called minimal if the Hopf differential--which splits as a sum as in (\ref{hsum})--vanishes.

Given $(T,d)$ as above, we always rescale the metric to $2d$, which makes it so that the Hopf differential of $\pi:\tilde{S}\to (T,2d)$ is $\phi$. For any other Riemann surface $S'$ representing a point in $\mathbf{T}_g$, there is a unique $\rho$-equivariant harmonic map from $\tilde{S}'\to (T,2d)$ (see \cite{Wf}).  Again like the surface case, the representation $\rho$ then defines an energy functional on Teichm{\"u}ller space. The same holds for products of $\mathbb{R}$-trees with admissible actions.

Let's now address the quadratic form $\mathbf{L}: T_S\mathbf{T}_g\times H^0(S,TS)^n\to\mathbb{R}$. Given a $C^\infty$ path of Riemann surfaces and a flow $r\mapsto f_r,$ we consider
\begin{equation}\label{2vvar}
    r\mapsto \mathcal{E}(S_r,\pi\circ \tilde{f_r}).
\end{equation}
In \cite[Section 5]{Moo}, Moore computes the derivative of the two-variable energy for maps from a surface to a Riemannian manifold. Using the characterization (\ref{mease}), one can word-for-word redo that computation, but with the measurable density $e(\pi\circ \tilde{f}_r)$, to see that (\ref{2vvar}) is $C^\infty$ in $r$. Working with a minimal map into a product of trees and $n$ vector fields, one does the computation $n$ times to get $\mathbf{L}$. We note that it only depends on the tangent vectors and not the specific path of Riemann surfaces and flow of maps, because a minimal map is a critical point for the two-variable energy.
\end{subsection}

\end{section}

\begin{section}{Minimal surfaces in products of $\mathbb{R}$-trees}
We first prove Theorem B2, and then Theorem B1.
\begin{subsection}{The Reich-Strebel energy formula}
Reich-Strebel computed a formula for the difference of energies of quasiconformal maps (equation 1.1 in \cite{RS}). Let $h:S\to M$ and $f:S\to S'$ be quasiconformal maps between Riemann surfaces, with a conformal metric on $M$. Let $\mu$ be the Beltrami form of $f$, and $\phi$ the Hopf differential of $h$, which need not be holomorphic. Then,
\begin{equation}\label{RSorig}
    \mathcal{E}(S',h\circ f^{-1}) -\mathcal{E}(S,h) =  -4\textrm{Re} \int_S \phi\cdot \frac{ \mu}{1-|\mu|^2} + 2\int_S e(h)\cdot \frac{|\mu|^2}{1-|\mu|^2}dA.
\end{equation}
The computation goes through just the same when $h$ maps $\tilde{S}$ equivariantly into an $\mathbb{R}$-tree. For a map $h$ to a tree $(T,d)$ with Hopf differential $\psi$, the energy density satisfies $e(h)=2\nu^{-1}|\psi|.$ We obtain the Proposition below.
\begin{prop}\label{RStree}
Let $h: \tilde{S}\to (T,d)$ be an equivariant harmonic map to an $\mathbb{R}$-tree with Hopf differential $\psi$, and $f:S'\to S$ a quasiconformal map. Then the following formula holds: 
\begin{equation}\label{RSfor}
    \mathcal{E}(S',h\circ \tilde{f}^{-1}) -\mathcal{E}(S,h) =  -4\textrm{Re} \int_S \psi\cdot \frac{ \mu}{1-|\mu|^2} + 4\int_S |\psi|\cdot \frac{|\mu|^2}{1-|\mu|^2}.
\end{equation}
\end{prop}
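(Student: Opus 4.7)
The strategy is to transfer the classical Reich-Strebel derivation of \eqref{RSorig} to the tree target setting, using the Korevaar-Schoen directional energy tensor in place of the smooth pullback metric, and then simplify using the identity $e(h)=2\nu^{-1}|\psi|$ that holds for tree-valued harmonic maps.

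First, I would recall the proof of \eqref{RSorig}. That derivation is pointwise: in a local holomorphic chart one uses the chain rule $\partial_z(h\circ f^{-1}) = (h_w f_z^{-1} - h_{\bar w}\overline{f_z}\,^{-1}\mu/(1-|\mu|^2))$ (and similar for $\partial_{\bar z}$) to rewrite the energy density $e(h\circ f^{-1})$ at $f(p)$ in terms of the components of $h^*\sigma$ at $p$ and the Beltrami coefficient $\mu$. The whole identity can be phrased purely in terms of the symmetric $(0,2)$-tensor $h^*\sigma$, its trace (which gives $e(h)$), and its $(2,0)$-part (which gives the Hopf differential $\phi$); integration over $S$ produces \eqref{RSorig}.

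Second, I would observe that this pointwise reformulation goes through verbatim when $h^*\sigma$ is replaced by the Korevaar-Schoen directional energy tensor $g_{ij}(h)$. Indeed, Korevaar-Schoen establish a.e. chain-rule/change-of-variables identities for $g_{ij}$ under precomposition with a quasiconformal homeomorphism, the energy density is $e(h)=\tfrac{1}{2}\mathrm{tr}_{\nu}g_{ij}(h)$, and the Hopf differential is given by the $(2,0)$-part as in \eqref{mhopf}. Consequently, for any Lipschitz equivariant map $h:\tilde{S}\to (T,d)$ and any quasiconformal $f:S'\to S$, one obtains
\[ \mathcal{E}(S',h\circ\tilde{f}^{-1})-\mathcal{E}(S,h) = -4\,\mathrm{Re}\!\int_S \psi\cdot\frac{\mu}{1-|\mu|^2} + 2\int_S e(h)\cdot\frac{|\mu|^2}{1-|\mu|^2}\,dA, \]
exactly as in \eqref{RSorig}, with $\psi$ the Hopf differential of $h$.

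Finally, I would specialize to our harmonic projection $h$ to the $\mathbb{R}$-tree. Because $h$ locally factors isometrically through a segment in $\mathbb{R}$ wherever $\psi\neq 0$, the tensor $g_{ij}(h)$ is of rank at most $1$ almost everywhere, which forces the trace and the norm of the $(2,0)$-part to be related by $e(h)=2\nu^{-1}|\psi|$ (this is the identity recorded in Section \ref{2.2}). Substituting this in the second integral, noting that $\nu^{-1}|\psi|\,dA = |\psi|$ as a $(1,1)$-form, converts the area-form integral into a coordinate-free integral of $|\psi|$ and yields the claimed \eqref{RSfor}.

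The only non-routine step is the second: justifying that the pointwise Reich-Strebel identity survives the passage to Korevaar-Schoen directional energies. This is essentially bookkeeping once one unpacks \cite{KS}, but some care is needed to ensure the chain rule for $g_{ij}$ under the quasiconformal homeomorphism $f$ applies almost everywhere with the expected transformation law; everything else is an algebraic substitution.
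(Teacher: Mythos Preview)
Your proposal is correct and follows exactly the paper's approach: the paper simply asserts that the Reich--Strebel computation (\ref{RSorig}) ``goes through just the same'' for tree targets and then substitutes the identity $e(h)=2\nu^{-1}|\psi|$, which is precisely your three-step outline. Your expansion of the second step---invoking the Korevaar--Schoen tensor and its chain rule under quasiconformal precomposition---is a welcome elaboration of what the paper leaves implicit.
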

In the formula above, $\tilde{f}$ is the lift to $\tilde{S}$ and $\mu$ is the Beltrami form.
\end{subsection}

\begin{subsection}{Proof of Theorem B2}
 Let $\phi\in \QD(S) - \{0\}$. For $t>0,$ let $M_t$ be the hyperbolic structure with hyperbolic metric $\sigma_t$ such that the identity map $h_t:S\to M_t$ is harmonic with Hopf differential $t\phi$, let $\mathcal{E}_\rho^t$ be the two-variable energy functional for $M_t$, and $E_\rho^t$ the corresponding energy functional on Teichm{\"u}ller space. Similarly, let $\mathcal{E}_\rho$ and $E_\rho$ be the energies for the $\mathbb{R}$-tree $(T,2d)$ determined by $\phi$ (with a rescaled metric). 

The main step in the proof of Theorem B2 is Lemma \ref{2conv}. If we rescale $\sigma_t$ by $t^{-1}$, then for any Riemann surface structure $S'$ and $C^2$ map $f:S'\to M_t,$ the energy with respect to the target metric $t^{-1}\sigma_t$ is $t^{-1}\mathcal{E}_\rho^t(S',f)$. Let $r\mapsto S_r$ be a path of Riemann surfaces and $r\mapsto f_r$ a flow starting at the identity map.  Lemma \ref{2conv} shows that the second derivative in $r$ of the energy of $h_t\circ f_r$ on $S_r$ with respect to the target metric $t^{-1}\sigma_t$ converges as $t\to\infty$ to the second derivative of the energy of $\pi\circ f_r:\tilde{S}_r\to (T,2d).$
\begin{lem}\label{2conv}
For $s>t$, $$\frac{1}{t}\frac{d^2}{dr^2}|_{r=0}\mathcal{E}_\rho^t(S_r,h_t\circ f_r)>\frac{1}{s}\frac{d^2}{dr^2}|_{r=0}\mathcal{E}_\rho^s(S_r,h_s\circ f_r)>\frac{d^2}{dr^2}|_{r=0}\mathcal{E}_\rho(S_r,\pi\circ \tilde{f}_r),$$ and $$\lim_{t\to\infty}\frac{1}{t}\frac{d^2}{dr^2}|_{r=0}\mathcal{E}_\rho^t(S_r,h_t\circ f_r)=\frac{d^2}{dr^2}|_{r=0}\mathcal{E}_\rho(S_r,\pi\circ \tilde{f}_r).$$
\end{lem}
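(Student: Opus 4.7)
The plan is to apply the Reich-Strebel formula (Proposition \ref{RStree} and equation (\ref{RSorig})) to both energies, reducing the lemma to a pointwise comparison of the energy densities of $h_t$ and $\pi$. Combining the variation of complex structure $r\mapsto S_r$ with the flow $r\mapsto f_r$ into a single family of quasiconformal maps $S_r\to S$, denote the associated Beltrami form by $\mu_r$ (so $\mu_0=0$), and write $\dot\mu,\ddot\mu$ for its first two $r$-derivatives at $r=0$. Taylor expanding $\mu/(1-|\mu|^2)$ and $|\mu|^2/(1-|\mu|^2)$ near $r=0$ and substituting into (\ref{RSorig}) for $M_t$ and into Proposition \ref{RStree} for $(T,2d)$, using $e(\pi)\,dA=2|\phi|$, I get
\begin{align*}
\frac{d^2}{dr^2}\Big|_{r=0}\mathcal{E}_\rho^t(S_r,h_t\circ f_r)&=-4\,\textrm{Re}\int_S t\phi\,\ddot\mu+4\int_S e(h_t)|\dot\mu|^2\,dA,\\
\frac{d^2}{dr^2}\Big|_{r=0}\mathcal{E}_\rho(S_r,\pi\circ\tilde f_r)&=-4\,\textrm{Re}\int_S\phi\,\ddot\mu+8\int_S|\phi|\,|\dot\mu|^2.
\end{align*}
Dividing the first line by $t$ aligns the $\ddot\mu$-terms across the two formulas, so the three quantities in the lemma differ only in their $|\dot\mu|^2$-integrals. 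The case $\dot\mu\equiv 0$ yields equality throughout, so the lemma reduces to showing, for $\dot\mu\not\equiv 0$, that $e(h_t)/t$ is strictly decreasing in $t$ on $S$ and converges pointwise to $2|\phi|/\nu$ as $t\to\infty$.

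For the tree comparison, write $a=|\phi|/\nu$ and decompose $e(h_t)=\mathcal{H}_t+\mathcal{L}_t$ with $\mathcal{H}_t\mathcal{L}_t=t^2a^2$. AM-GM gives $e(h_t)/t\ge 2a$, and since $h_t$ is a harmonic orientation-preserving diffeomorphism onto the hyperbolic surface $M_t$, the Jacobian $\mathcal{H}_t-\mathcal{L}_t$ is strictly positive everywhere, making this inequality strict; integrating against $|\dot\mu|^2$ yields the second strict inequality of the lemma. For the limit, Wolf's convergence theorem \cite{W} gives $\mathcal{H}_t/t\to a$ pointwise as $t\to\infty$, hence $e(h_t)/t\to 2a$, and bounded convergence delivers the stated limit.

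The main obstacle is strict monotonicity of $e(h_t)/t$ in $t$. I propose to obtain this from the Bochner identity $\Delta_\nu\log\mathcal{H}_t=2(\mathcal{H}_t-\mathcal{L}_t)-2K_\Sigma$ for harmonic maps between hyperbolic surfaces. Setting $u_t=\mathcal{H}_t/t$ and using $\mathcal{L}_t=ta^2/u_t$, this rewrites as $\Delta_\nu\log u_t=2t(u_t-a^2/u_t)-2K_\Sigma$. Differentiating in $t$, the function $\tilde\psi_t:=\partial_t u_t/u_t$ satisfies the linear elliptic equation
\[
\Delta_\nu\tilde\psi_t-q_t\tilde\psi_t=2(u_t-a^2/u_t),
\]
with $q_t=2tu_t(1+a^2/u_t^2)>0$ and right-hand side strictly positive on $S$ (since $u_t>a$ everywhere: at $\phi=0$ points because $a=0$ and $u_t>0$, and elsewhere by Jacobian positivity). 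The strong maximum principle on compact $S$ then forces $\tilde\psi_t<0$ everywhere, so $u_t$ is strictly decreasing in $t$. Because $x\mapsto x+a^2/x$ is strictly increasing for $x>a\ge 0$, it follows that $e(h_t)/t=u_t+a^2/u_t$ is strictly decreasing in $t$, completing the reduction.
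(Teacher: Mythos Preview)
Your proof is correct and follows the same overall strategy as the paper: apply the Reich--Strebel formula to reduce the lemma to the pointwise monotonicity and limit of $e(h_t)/t$ (the content of the paper's Lemma~\ref{growth}). Where you depart from the paper is in the proof of that monotonicity. The paper invokes Wolf's result \cite[Proposition~4.3]{W2} that the Beltrami coefficient $|\mu_t|$ of $h_t$ increases monotonically in $t$ toward $1$, and reads off the claim from the identity $e(h_t)/t=(2|\phi|/\nu)\cosh\log|\mu_t|^{-1}$. You instead differentiate the Bochner equation for $u_t=\mathcal{H}_t/t$ in $t$ and apply the maximum principle directly. Since $|\mu_t|=a/u_t$ away from the zeros of $\phi$, your strict monotonicity of $u_t$ is equivalent to Wolf's strict monotonicity of $|\mu_t|$; in effect you have supplied a self-contained reproof of that proposition. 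This buys independence from the external citation, and as a bonus gives strict decrease of $e(h_t)/t$ even at the zeros of $\phi$, where the paper's $\cosh$ identity degenerates. The cost is that you tacitly use smooth dependence of $\mathcal{H}_t$ on $t$, which deserves a word of justification. One cosmetic note: with standard conventions the source-curvature term in your Bochner identity carries the opposite sign, but this is irrelevant since that term disappears after differentiating in $t$.
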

Toward the proof, we first record the lemma below about the growth of the energy density.
\begin{lem}\label{growth}
Let $e(h_t)$ be the energy density of $h_t$ with respect to the target metric $\sigma_t$. Then for $s\geq t$, 
\begin{equation}\label{mono}
    \frac{e(h_{t})}{t}\geq \frac{e(h_s)}{s},
\end{equation}
and the inequality is strict away from the zeros of $\phi$. Moreover, 
\begin{equation}\label{holim}
    \lim_{t\to\infty} \frac{e(h_{t})}{t}=2\nu^{-1}|\phi|.
\end{equation}
\end{lem}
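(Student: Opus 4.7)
The approach is to express the energy density pointwise in terms of the holomorphic and antiholomorphic energies
$$H_t := \sigma_t(h_t)\nu^{-1}|(h_t)_z|^2, \qquad L_t := \sigma_t(h_t)\nu^{-1}|(h_t)_{\bar z}|^2,$$
which satisfy $e(h_t) = H_t + L_t$ and, by the Hopf condition $\phi(h_t) = t\phi$, the pointwise identity $H_tL_t = t^2|\phi|^2/\nu^2$. Since $h_t$ is an orientation-preserving diffeomorphism between hyperbolic surfaces, its Jacobian $H_t - L_t$ is strictly positive everywhere on $S$, and hence so is $e(h_t)$.

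The monotonicity (\ref{mono}) would be obtained from the standard Bochner identity for harmonic maps into a target of constant curvature $-1$,
$$\Delta_\nu \log H_t = 2(H_t - L_t) + 2K_\nu,$$
by differentiating in $t$. After using $H_tL_t = t^2|\phi|^2/\nu^2$ to eliminate $\partial_t L_t$ in favor of $\partial_t H_t$, one obtains a linear elliptic PDE on the closed surface $S$ for the ratio $f := \partial_t H_t/H_t$, namely
$$(\Delta_\nu - 2e(h_t))\,f = -\frac{4L_t}{t}.$$
The key step is to test $f$ against the barrier $1/t$: setting $g := f - 1/t$, a short calculation gives
$$(\Delta_\nu - 2e(h_t))\,g = \frac{2(H_t - L_t)}{t},$$
whose right-hand side is strictly positive everywhere. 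Since the zeroth order coefficient $2e(h_t)$ is strictly positive, the maximum principle on closed $S$ forces $g < 0$ pointwise: at a maximum point $p$ with $g(p) \geq 0$ one would have $\Delta_\nu g(p) \leq 0$ and $-2e(h_t)(p)g(p) \leq 0$, contradicting the strictly positive right hand side. An algebraic manipulation then yields
$$\frac{d}{dt}\left(\frac{e(h_t)}{t}\right) \,=\, \frac{H_t - L_t}{t}\cdot g,$$
which is strictly negative at every point of $S$, giving (\ref{mono}) with strict inequality everywhere---and in particular away from the zeros of $\phi$.

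For (\ref{holim}), the AM-GM inequality $H_t + L_t \geq 2\sqrt{H_tL_t} = 2t|\phi|/\nu$ together with the pointwise monotonicity just established shows that $F := \lim_{t\to\infty} e(h_t)/t$ exists pointwise and satisfies $F \geq 2|\phi|/\nu$. Wolf's convergence theorem \cite{W}, quoted in the introduction, identifies the total-energy limit $\mathcal{E}(S,h_t)/t \to 2\int |\phi|/\nu\,dA$. Monotone convergence then forces $\int F\,dA = 2\int|\phi|/\nu\,dA$, so that the non-negative continuous function $F - 2|\phi|/\nu$ integrates to zero and therefore vanishes identically.

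The conceptual obstacle is identifying the correct barrier $1/t$ for the maximum principle argument: once the PDE for $g = \partial_t H_t/H_t - 1/t$ is written down the signs line up perfectly, but finding the right linear combination is where the real work lies. The advantage of this formulation is that working directly with $H_t$ and $L_t$, rather than with the conformal factor $w_t$ defined by $H_t = (t|\phi|/\nu)e^{2w_t}$, avoids any difficulty at the zeros of $\phi$: all the quantities entering the argument extend smoothly across the zero set, whereas $w_t$ would introduce logarithmic singularities there that would complicate the comparison at the boundary.
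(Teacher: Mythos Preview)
Your argument is correct and takes a genuinely different route from the paper's. The paper simply quotes \cite[Proposition 4.3]{W2} that $|\mu_t|$ is pointwise increasing in $t$ with limit $1$ away from the zeros of $\phi$, and plugs this into the identity $e(h_t)/t=(2|\phi|/\nu)\cosh\log|\mu_t|^{-1}$, which immediately gives both (\ref{mono}) and (\ref{holim}). You instead differentiate the Bochner identity in $t$ and run a maximum-principle comparison with the barrier $1/t$, never invoking Wolf's monotonicity as a black box. This buys you something the paper does not state: strict monotonicity of $e(h_t)/t$ at \emph{every} point of $S$, including the zeros of $\phi$, whereas the paper's formula degenerates to $0\cdot\infty$ at the zeros and only gives strictness on their complement. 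The paper's proof is shorter because it outsources the hard step; yours is self-contained.

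One small gap: in your limit argument you assert that $F=\lim_{t\to\infty}e(h_t)/t$ is continuous, but a decreasing pointwise limit of continuous functions is only upper semicontinuous, so ``nonnegative with integral zero'' yields $F=2|\phi|/\nu$ only almost everywhere, not identically. This is harmless for the application in Lemma~\ref{2conv} (dominated convergence needs only a.e.\ convergence plus the monotone bound), and the paper's own proof is equally silent about the value of the limit at the zeros of $\phi$. If you want the pointwise statement everywhere, a clean route is to observe directly that
\[
\int_S\!\Big(\frac{e(h_t)}{t}-\frac{2|\phi|}{\nu}\Big)dA=\int_S\frac{(\sqrt{H_t}-\sqrt{L_t})^2}{t}\,dA\le\int_S\frac{H_t-L_t}{t}\,dA=\frac{\mathrm{Area}(M_t)}{t}=\frac{4\pi(g-1)}{t},
\]
by Gauss--Bonnet, which gives the total-energy limit without citing Wolf; then either argue pointwise at the zeros separately or simply state (\ref{holim}) as holding away from the zeros, which is all the paper actually proves and all that is used downstream.
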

\begin{proof}
Let $\mu_t$ and $\mu_s$ be the Beltrami forms of $h_t$ and $h_s$ respectively. It is proved in \cite[Proposition 4.3]{W2} that away from the zeros of $\phi$ (at which $|\mu_t|=0$ for every $t$), $|\mu_t|$ monotonically increases to $1$ as $t\to \infty.$ A simple computation gives $$e(h) = \frac{2t|\phi|}{\nu}\cosh \log |\mu_t|^{-1},$$ and likewise for $s.$ Therefore, (\ref{mono}) is equivalent to the inequality
\begin{equation}\label{cosh}
    \cosh \log |\mu_{t_0}|^{-1}\geq \cosh \log |\mu_{t}|^{-1}.
\end{equation}
 Since $|\mu_t|<1$ everywhere, the inequality (\ref{cosh}) follows. Using the limiting behaviour of $|\mu_t|,$ we take $t\to\infty$ to obtain (\ref{holim}).
\end{proof}

\begin{proof}[Proof of Lemma \ref{2conv}]
Let $(\mu,V)\in T_S\mathbf{T}_g\times H^0(S,TS)$, and let $r\mapsto f_r$ be the flow of $V$.  Let $\mu_r$ be the Beltrami form of $f_r^{-1}$, and $\alpha$ the $C^\infty$ $(1,-1)$-form and $\beta$ the $C^\infty$ function on $S$ described by $$\alpha(z) = \frac{d^2}{dr^2}\bigg|_{r=0} \frac{ \mu_r(z)}{1-|\mu_r(z)|^2}, \hspace{1mm} \beta(z)= \frac{d^2}{dr^2}\bigg|_{r=0} \frac{|\mu_r(z)|^2}{1-|\mu_r(z)|^2}.$$
 We use the Reich-Strebel fomula (\ref{RSorig}). For each $t>0$,
\begin{align*}
\frac{1}{t}\frac{d^2}{dr^2}\bigg|_{r=0}\mathcal{E}_\rho^t(S_r,h^t\circ f_r) &= \frac{1}{t}\frac{d^2}{dr^2}\bigg|_{r=0} \Big (\mathcal{E}_\rho^t(S_r,h^t\circ (f_r^{-1})^{-1})- \mathcal{E}_\rho^t(S,h^t)\Big ) \\
    &=  \frac{d^2}{dr^2}\bigg|_{r=0} \Big (-4\textrm{Re} \int_S \phi\cdot \frac{ \mu_r}{1-|\mu_r|^2} + 2\int_S \frac{e(h^t)}{t}\cdot \frac{|\mu_r|^2}{1-|\mu_r|^2}dA\Big ) \\
    &=-4\textrm{Re} \int_S \phi\cdot\alpha + 2\int_S \frac{e(h^t)}{t}\cdot \beta dA.
\end{align*}
On the other hand, by the same computation, but using (\ref{RSfor}),
$$\frac{d^2}{dr^2}\bigg|_{r=0}\mathcal{E}_\rho(S_r,\pi\circ \tilde{f}_r) =-4\textrm{Re} \int_S \phi\cdot\alpha + 4\int_S |\phi(h)|\cdot \beta.$$
By Lemma \ref{growth}, for $s>t$, $$\int_S \frac{e(h^t)}{t}\cdot \beta dA> \int_S \frac{e(h^s)}{s}\cdot \beta dA,$$ so that $\frac{L_t}{t}>\frac{L_s}{s}$. By Lemma \ref{growth} again and the dominated convergence theorem, $$\int_S \frac{e(h^t)}{t}\cdot \beta dA\to 2\int |\phi| \cdot \beta $$ in a strictly decreasing fashion as $t\to\infty$. The result follows.

\end{proof}
Moving onto the proof of Theorem B2, we resume the notation from the introduction: for $i=1,\dots, n,$ $\phi_i$ are nonzero holomorphic quadratic differentials on $S$ summing to $0$. The product of $\mathbb{R}$-trees $(T_i,2d_i)$, which we denote by $X$, comes equipped with the action $\rho=(\rho_1\times\dots\times \rho_n).$ For each positive $t>0$, $M_i^t$ is the hyperbolic structure such that the identity map $h_i^t: S\to M_i^t$ is harmonic and has Hopf differential $t\phi_i$. The energy functionals are denoted $\mathcal{E}_\rho$ and $\mathbf{E}_{\rho}$ for the trees and $\mathcal{E}_\rho^t$ and $\mathbf{E}_{\rho}^t$ for the surfaces.

We define $\mathbf{L}_t: T_s\mathbf{T}_g\times H^0(S,TS)^n\to\R$ for the harmonic map $h_t=(h_1^t,\dots, h_n^t)$ in the same way as $\mathbf{L}$: if $r\mapsto S_r$ is a path of Riemann surfaces tangent to a Beltrami form $\mu$ at $r=0$, and $V_1,\dots, V_n$ are vector fields giving rise to flows $\mapsto f_1^r,\dots, f_n^r,$ then we set  $$\mathbf{L}_t(\mu,V_1,\dots, V_n) = \frac{d^2}{dr^2}|_{r=0}\mathcal{E}_\rho^t(S_r,h_r^t),$$ where $h_t^r=(h_1^t\circ f_1^r,\dots, h_n^t\circ f_n^r).$ 
\begin{remark}\label{twoven}
The two-variable energy for $\prod_{i=1}^n M_i^t$ is defined on $\mathbf{T}_g\times\prod_{i=1}^n\textrm{Maps}(S,M_i^t)$. Since any small perturbation of the identity map is a diffeomorphism, the space on which $\mathbf{L}_t$ acts is canonically isomorphic to the tangent space of $\mathbf{T}_g\times\prod_{i=1}^n\textrm{Maps}(S,M_i^t)$ at $S\times \prod_{i=1}^n \textrm{id}$. 
\end{remark}
\begin{remark}
Since $S$ is a critical point for the two-variables energies $\mathcal{E}_\rho^t$ and $\mathcal{E}_\rho$, the second order derivatives $\mathbf{L}_t$ and $\mathbf{L}$ depend only on the first order data $\mu, V_1,\dots, V_n$ (this is not true for the second variations of each component harmonic map).
\end{remark}

\begin{prop}\label{nconv}
For $s>t$, $$\frac{\mathbf{L}_t}{t}>\frac{\mathbf{L}_s}{s}>\mathbf{L},$$ and $\lim_{t\to\infty}\frac{\mathbf{L}_t}{t}=\mathbf{L}.$
\end{prop}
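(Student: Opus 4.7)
The plan is to reduce Proposition \ref{nconv} to a componentwise application of Lemma \ref{2conv}, using the fact that the energy functional splits additively over the $n$ factors. Since $X$ is a product of $\mathbb{R}$-trees and $\prod_{i=1}^n M_i^t$ is a product of hyperbolic surfaces, the energy densities are sums of the component densities; integrating gives
\begin{equation*}
\mathcal{E}_\rho^t(S_r, h_t^r) = \sum_{i=1}^n \mathcal{E}_{\rho_i}^t(S_r, h_i^t \circ f_i^r), \qquad \mathcal{E}_\rho(S_r, \pi \circ \tilde{f}_r) = \sum_{i=1}^n \mathcal{E}_{\rho_i}(S_r, \pi_i \circ \tilde{f}_i^r).
\end{equation*}
Taking $\frac{d^2}{dr^2}|_{r=0}$ of both sides yields the decompositions $\mathbf{L}_t = \sum_{i=1}^n L_{t,i}$ and $\mathbf{L} = \sum_{i=1}^n L_i$, where $L_{t,i}$ and $L_i$ denote the $i$-th component second variations.

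\paragraph{Main steps.} For each fixed $i$, the path $r \mapsto S_r$ together with the single flow $r \mapsto f_i^r$, the quadratic differential $\phi_i$, the hyperbolic structure $M_i^t$, and the $\mathbb{R}$-tree $(T_i, 2d_i)$ form exactly the data to which Lemma \ref{2conv} applies. Lemma \ref{2conv} therefore gives, for each $i$,
\begin{equation*}
\frac{L_{t,i}}{t} > \frac{L_{s,i}}{s} > L_i \text{ for } s > t, \qquad \lim_{t \to \infty} \frac{L_{t,i}}{t} = L_i.
\end{equation*}
Summing over $i$ immediately produces the quadratic-form inequalities $\frac{\mathbf{L}_t}{t} > \frac{\mathbf{L}_s}{s} > \mathbf{L}$ and, because the sum is finite and no uniform estimate is required, the limit $\lim_{t \to \infty} \frac{\mathbf{L}_t}{t} = \mathbf{L}$.

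\paragraph{Main obstacle.} The only delicate point is the strictness of the inequality once summed. Inspection of the proof of Lemma \ref{2conv} shows that $\frac{L_{t,i}}{t} - \frac{L_{s,i}}{s} = 2 \int_S \big(\tfrac{e(h_i^t)}{t} - \tfrac{e(h_i^s)}{s}\big)\, \beta_i\, dA$, where $\beta_i \geq 0$ is the non-negative Reich–Strebel weight in the $i$-th factor. By Lemma \ref{growth} the prefactor is strictly positive away from the isolated zeros of $\phi_i$, so strictness in factor $i$ reduces to $\beta_i \not\equiv 0$. The thing to verify is that for any non-zero input $(\mu, V_1, \ldots, V_n)$, at least one $\beta_i$ fails to vanish identically. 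This follows from the description of $T_S \mathbf{T}_g$ as Beltrami forms modulo $\bar\partial$-coboundaries: the first-order effective Beltrami form entering the $i$-th factor is of the shape $\mu - \bar\partial V_i$, and these cannot all be zero (in $L^2$) unless the input represents the zero vector in $T_S \mathbf{T}_g \times H^0(S, TS)^n$. With this in hand the summation preserves strictness and the proposition follows.
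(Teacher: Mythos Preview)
Your approach—decompose the energy over the $n$ factors and apply Lemma~\ref{2conv} componentwise—is exactly the paper's, whose proof is in full the single sentence ``We invoke Lemma~\ref{2conv} $n$ times.''

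Your ``Main obstacle'' paragraph on strictness goes beyond what the paper does (the paper does not address it), and the concern you raise is legitimate: in a factor where the effective first-order Beltrami vanishes, the inequality in Lemma~\ref{2conv} degenerates to equality. However, your resolution is not quite right as written. If one takes a path with representative Beltrami form $\hat\mu=\bar\partial V$ and sets $V_1=\cdots=V_n=V$, then every $\hat\mu-\bar\partial V_i$ vanishes identically, yet the input $(0,V,\dots,V)\in T_S\mathbf{T}_g\times H^0(S,TS)^n$ is nonzero. What this really reveals is a subtlety in the paper's identification of the domain of $\mathbf{L}_t$: by diffeomorphism invariance, the Hessian of the two-variable energy is null along the diagonal $\{(\bar\partial W,W,\dots,W)\}$ and descends to the quotient by that subspace, which is not the same as quotienting only the first factor by $\{\bar\partial W\}$. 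This is harmless for the paper's purposes, since the proof of Theorem~B2 uses only the non-strict monotonicity of $\mathbf{L}_t/t$ and the limit, both of which your summation argument delivers without issue.
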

\begin{proof}
We invoke Lemma \ref{2conv} $n$ times.
\end{proof}

\begin{lem}\label{ltet}
The index of $\mathbf{L}_t$ is equal to the index of $\mathbf{E}_\rho^t.$
\end{lem}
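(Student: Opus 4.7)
The plan is a Schur complement decomposition of $\mathbf{L}_t$ on $T_S\mathbf{T}_g \oplus H^0(S,TS)^n$. Denote by $C$ the restriction of $\mathbf{L}_t$ to $\{0\}\oplus H^0(S,TS)^n$. The argument rests on two facts: (i) $C$ is positive definite; and (ii) for every $\mu \in T_S\mathbf{T}_g$, the minimum of $V \mapsto \mathbf{L}_t(\mu, V)$ is attained at a unique $V^*(\mu)$ that depends linearly on $\mu$, and equals the value of the Hessian of $\mathbf{E}_\rho^t$ at $(\mu,\mu)$. Granting these, completion of squares yields
\[
\mathbf{L}_t(\mu, V) \;=\; \mathrm{Hess}\,\mathbf{E}_\rho^t(\mu,\mu) + C\bigl(V - V^*(\mu),\, V - V^*(\mu)\bigr),
\]
from which the index equality is immediate: any subspace on which $\mathbf{L}_t$ is negative definite must project injectively to a subspace of $T_S\mathbf{T}_g$ on which $\mathrm{Hess}\,\mathbf{E}_\rho^t$ is negative definite, so that $\mathrm{index}(\mathbf{L}_t) \le \mathrm{index}(\mathbf{E}_\rho^t)$; conversely, for any $W \subset T_S\mathbf{T}_g$ on which $\mathrm{Hess}\,\mathbf{E}_\rho^t$ is negative definite, the graph $\{(\mu, V^*(\mu)) : \mu \in W\}$ is negative definite for $\mathbf{L}_t$ of the same dimension.

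For (i), a direct computation gives
\[
\mathbf{L}_t(0, V_1, \ldots, V_n) \;=\; \sum_{i=1}^n \frac{d^2}{dr^2}\bigg|_{r=0} \mathcal{E}\bigl(S,\, h_i^t \circ f_i^r\bigr),
\]
which is the sum over $i$ of the Jacobi quadratic form of the harmonic diffeomorphism $h_i^t: S \to M_i^t$ evaluated at $W_i := dh_i^t(V_i)$. Since $M_i^t$ has constant negative curvature, each Jacobi form $\int_S (|\nabla W_i|^2 - \langle R^{M_i^t}(W_i, dh_i^t) dh_i^t, W_i\rangle) dA$ is non-negative, and vanishes only on parallel sections of $(h_i^t)^* TM_i^t$. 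Since $g \ge 2$, no nonzero such section exists; as $dh_i^t$ is an isomorphism, $W_i = 0$ forces $V_i = 0$, proving positive definiteness.

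For (ii), let $r \mapsto S_r$ be a $C^\infty$ path tangent to $\mu$ at $r=0$. By the implicit function theorem applied to the harmonic map equation, whose linearization at $h_i^t$ is the Jacobi operator (invertible by (i)), there is a smooth family of harmonic maps $\tilde h_{i,r}: S_r \to M_i^t$ with $\tilde h_{i,0} = h_i^t$. Each $\tilde h_{i,r}$ remains a diffeomorphism for small $r$, so we may uniquely factor $\tilde h_{i,r} = h_i^t \circ f_i^r$ for a smooth family of diffeomorphisms $f_i^r$ of $S$ with $f_i^0 = \mathrm{id}$. Setting $V_i^*(\mu) := \frac{d}{dr}|_{r=0} f_i^r$, the identity $\mathbf{E}_\rho^t(S_r) = \mathcal{E}_\rho^t(S_r, h^t \circ f_r)$ differentiated twice at $r=0$ (using that $(S, h^t)$ is a critical point of the two-variable energy, so first derivatives vanish) yields $\mathrm{Hess}\,\mathbf{E}_\rho^t(\mu,\mu) = \mathbf{L}_t(\mu, V^*(\mu))$. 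On the other hand, for arbitrary flows $f_i^r$ of vector fields $V_i$, the minimizing property of the harmonic map gives $\mathbf{E}_\rho^t(S_r) \le \mathcal{E}_\rho^t(S_r, h^t \circ f_r)$, and expanding both sides to second order at $r=0$ yields $\mathrm{Hess}\,\mathbf{E}_\rho^t(\mu,\mu) \le \mathbf{L}_t(\mu, V)$, confirming that $V^*(\mu)$ realizes the minimum.

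The main obstacle is the rigorous justification of the smoothness statements in (ii): the existence of a smooth family of harmonic maps $\tilde h_{i,r}$ and its unique factorization through $h_i^t$. Both follow from standard implicit function theorem arguments applied to the harmonic map equation in a suitable H\"older or Sobolev space, using the invertibility of the Jacobi operator from (i); once set up, the algebraic Schur complement argument of the first paragraph finishes the proof.
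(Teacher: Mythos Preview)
Your argument is correct and rests on the same two moves as the paper: the minimizing inequality $\mathbf{E}_\rho^t(S_r)\le \mathcal{E}_\rho^t(S_r,h_t^r)$, and the factorization of the harmonic maps $S_r\to M_i^t$ through the fixed diffeomorphisms $h_i^t$ to produce the self-maps $f_i^r$. The paper simply runs these as two separate index inequalities rather than assembling them into a Schur complement; in particular it needs only that $C$ is positive \emph{semi}-definite (immediate from the minimizing property with the domain fixed) to get injectivity of the projection to $T_S\mathbf{T}_g$, whereas you prove strict positive definiteness via the Jacobi form in negative curvature. That extra step buys you the clean identity $\mathbf{L}_t(\mu,V)=\mathrm{Hess}\,\mathbf{E}_\rho^t(\mu,\mu)+C(V-V^*(\mu),V-V^*(\mu))$ and the linearity of $V^*$, but is not needed for the bare index equality.
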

\begin{proof}
Let $r\mapsto S_r$ be a path of Riemann surfaces, tangent to the Beltrami form $\mu$ at $r=0$, and suppose there exists $(V_1,\dots,V_n)\in H^0(S,TS)^n$ such that $\mathbf{L}_t(\mu,V_1,\dots, V_n)<0$. For each fixed $t>0,$ the maps $r\mapsto \mathbf{E}_\rho^t(S_r)$ and $r\mapsto\mathcal{E}_\rho^t(S_r,h_t^r)$ have zero first variation at $r=0$. By the minimizing property for harmonic maps, $\mathbf{E}_\rho^t(S_r)\leq \mathcal{E}_\rho^t(S_r,h_t^r)$ for every $r,$ and it follows that $$\frac{1}{t}\frac{d^2}{dr^2}|_{r=0} \mathbf{E}_\rho^t(S_r)\leq\frac{1}{t}\frac{d^2}{dr^2}|_{r=0}\mathcal{E}_\rho^t(S_r,h_t^r)=\frac{1}{t}\mathbf{L}_t(\mu,V_1,\dots, V_n)<0.$$ So, the index of $\mathbf{E}_\rho^t$ is at least that of $\mathbf{L}_t.$

For the other direction, assume $r\mapsto S_r$ lowers $\mathbf{E}_\rho^t$ to second order, and for each $r>0,$ let $k=(k_1^r,\dots, k_n^r): S_r\to \prod_{i=1}^n M_i^t$ be the harmonic map in the class of the identity. All $h_i^t$'s and $k_i^r$'s are orientation-preserving diffeomorphisms. Set $f_i^r=(h_i^t)^{-1}\circ k_i^r$ and let $V_i$ be the infinitesimal generator of the flow $r\mapsto f_i^r$. Then $$\frac{1}{t}\mathbf{L}_t(\mu,V_1,\dots, V_n)= \frac{1}{t}\frac{d^2}{dr^2}|_{r=0}\mathbf{E}_\rho^t(S_r)<0,$$ which gives the result.
\end{proof}
We now deduce Theorem B2.
\begin{proof}[Proof of Theorem B2]
Proposition \ref{nconv} implies that the index of $\mathbf{L}_t$ is non-decreasing with $t$, and converges to the self-maps index of $S$ for $\mathbf{E}_\rho$. We then apply Lemma \ref{ltet} to obtain the same statement for the index of $\mathbf{E}_\rho^t$ at $S$.
\end{proof}

\end{subsection}
\begin{subsection}{Proof of Theorem B1}
The proof of Theorem B1 is similar to that of Theorem B2, so we don't go through every detail. The main difference is that we replace Lemma \ref{2conv} with Lemma \ref{conv} below.

As above, let $M_t$ be the hyperbolic structure on $\Sigma_g$ with hyperbolic metric $\sigma_t$ such that the identity map has Hopf differential $t\phi$, with energy functional $E_\rho^t$, and let $E_\rho$ be the energy functional for the $\mathbb{R}$-tree $(T,2d)$ for $\phi$. 
\begin{lem}\label{conv}
For all Riemann surfaces $S'$, 
\begin{equation*}
    \lim_{t\to \infty} \frac{E_\rho^t(S')}{t} =E_\rho(S').
\end{equation*}
\end{lem}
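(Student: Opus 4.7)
I will combine the Reich–Strebel formula~\eqref{RSorig} with the monotone pointwise limit $e(h_t)/t \to 2\nu^{-1}|\phi|$ from Lemma~\ref{growth} to obtain matching $\liminf$ and $\limsup$ bounds for $E_\rho^t(S')/t$.

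For the lower bound, I would let $h_t': S' \to M_t$ denote the harmonic map in the homotopy class of the identity and form the diffeomorphism $g_t := h_t^{-1}\circ h_t': S' \to S$, so that $h_t' = h_t \circ g_t$. Applying~\eqref{RSorig} to the harmonic map $h_t: S \to M_t$ with the quasiconformal map $g_t^{-1}: S \to S'$ (Beltrami form $\mu_t$ on $S$) gives an exact identity
\[
\frac{E_\rho^t(S')}{t} \;=\; \frac{\mathcal{E}_\rho^t(S,h_t)}{t} \;-\; 4\,\mathrm{Re}\!\int_S \phi\cdot\frac{\mu_t}{1-|\mu_t|^2} \;+\; 2\!\int_S \frac{e(h_t)}{t}\cdot\frac{|\mu_t|^2}{1-|\mu_t|^2}\,dA.
\]
Since Lemma~\ref{growth} gives $e(h_t)/t \geq 2\nu^{-1}|\phi|$, replacing the last integrand by its lower bound and recognizing the result through Proposition~\ref{RStree} yields
\[
\frac{E_\rho^t(S')}{t} \;\geq\; \frac{\mathcal{E}_\rho^t(S,h_t)}{t} \;+\; \bigl(\mathcal{E}(S', \pi\circ \tilde{g}_t) - \mathcal{E}(S,\pi)\bigr).
\]
The map $\pi\circ\tilde{g}_t$ is $\rho$-equivariant, hence $\mathcal{E}(S', \pi\circ\tilde{g}_t) \geq E_\rho(S')$. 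Moreover $\mathcal{E}_\rho^t(S,h_t)/t \to \mathcal{E}(S,\pi) = E_\rho(S)$ by integrating Lemma~\ref{growth} (dominated convergence applies via the monotone dominator $e(h_1)$). Combining gives $\liminf_t E_\rho^t(S')/t \geq E_\rho(S')$.

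For the upper bound, I would fix a single quasiconformal $f: S \to S'$ (independent of $t$) with Beltrami form $\mu$ satisfying $\|\mu\|_\infty < 1$. Then $h_t \circ f^{-1}$ is a trial map for $E_\rho^t(S')$, and Reich–Strebel combined with dominated convergence—now legitimate because the factor $|\mu|^2/(1-|\mu|^2)$ is a fixed $L^\infty$ function, and $e(h_t)/t$ decreases monotonically to $2\nu^{-1}|\phi|$—gives
\[
\frac{E_\rho^t(S')}{t} \;\leq\; \frac{\mathcal{E}_\rho^t(S', h_t\circ f^{-1})}{t} \;\longrightarrow\; \mathcal{E}(S', \pi\circ \tilde{f}^{-1}).
\]
Hence $\limsup_t E_\rho^t(S')/t \leq \inf_f \mathcal{E}(S', \pi\circ \tilde{f}^{-1})$, where the infimum ranges over quasiconformal $f: S \to S'$.

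The main obstacle is identifying this infimum with $E_\rho(S')$. Each $\pi\circ\tilde{f}^{-1}$ is a $\rho$-equivariant competitor, so $\inf_f \mathcal{E}(S', \pi\circ\tilde{f}^{-1}) \geq E_\rho(S')$ trivially; the reverse requires an approximation argument showing that the harmonic map $\pi': \tilde{S}' \to (T,2d)$ can be approached in energy by such compositions. This is where the tree structure matters: since $\pi$ is the quotient onto the leaf space of the vertical foliation of $\phi$, by choosing $f$ so that its Beltrami form aligns appropriately with the Hopf differential of $\pi'$ one can drive $\mathcal{E}(S',\pi\circ\tilde{f}^{-1})$ arbitrarily close to $E_\rho(S')$. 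This density step—essentially a tree-level analog of the standard fact that QC deformations are dense in Teichm\"uller space—is the only point beyond routine manipulation with Reich–Strebel and Lemma~\ref{growth}.
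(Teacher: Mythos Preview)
Your lower bound is correct and in fact reproduces, with one fewer application, the chain of inequalities the paper uses in its proof of Theorem~B1. The identity you write via Reich--Strebel together with $e(h_t)/t \ge 2\nu^{-1}|\phi|$ from Lemma~\ref{growth} does give $E_\rho^t(S')/t \ge E_\rho(S')$ for every $t$, so $\liminf_t E_\rho^t(S')/t \ge E_\rho(S')$ as you claim.

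The upper bound, however, has a genuine gap, and it is precisely the step you flag as ``the only point beyond routine manipulation''. You need $\inf_f \mathcal{E}(S',\pi\circ\tilde f^{-1}) = E_\rho(S')$, but this equality is not a density triviality. The harmonic map $\pi':\tilde S'\to (T,2d)$ has its own Hopf differential $\psi'$, whose vertical foliation need not be topologically equivalent to that of $\phi$; in particular, $\pi'$ need not factor as $\pi\circ\tilde g$ for any diffeomorphism $g$, and there is no evident mechanism by which ``aligning the Beltrami form of $f$ with $\psi'$'' drives $\mathcal{E}(S',\pi\circ\tilde f^{-1})$ down to $E_\rho(S')$. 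Your remark that this is a ``tree-level analog of the standard fact that QC deformations are dense in Teichm\"uller space'' conflates two different statements: surjectivity of Beltrami forms onto $\mathbf{T}_g$ is immediate, but what you need is that competitors of the very special form $\pi\circ\tilde f^{-1}$ are energy-dense among all $\rho$-equivariant Lipschitz maps $\tilde S'\to T$. That is a substantive claim about harmonic maps to trees, not a formality, and you have not proved it. Attempting to close the gap using the maps $g_t$ from your lower bound leads back in a circle to the quantity you are trying to compute.

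The paper avoids this obstacle entirely by taking a different route. Rather than comparing competitors directly, it invokes Wolf's description of the Thurston boundary: the rescaled hyperbolic metrics $\sigma_t/t$ converge to the measured foliation of $\phi$ in the cone $\mathcal{C}$ over the Thurston compactification (Lemma~\ref{Wo}), and the energy functional $S'\mapsto E_{S'}$ extends continuously across this boundary (Theorem~\ref{Cont}, proved from Wolf's Lemmas~\ref{Wo} and~\ref{E estimate}). Continuity then gives $E_\rho^t(S')/t = E_{S'}(\sigma_t/t)\to E_{S'}(\phi)=E_\rho(S')$ in one stroke, with no need to approximate $\pi'$ by compositions through $\pi$. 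So the ``density step'' you isolate is exactly where the real content of the lemma lies, and the paper supplies it through compactification rather than through the variational argument you sketch.
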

In order to prove the lemma, we recall some facts about the Thurston compactification of Teichm{\"u}ller space. Let $\mathcal{S}$ be the set of homotopically non-trivial simple closed curves on $\Sigma_g$ and $\mathbb{R}^{\mathcal{S}}$ the product space with the weak topology. There is an embedding $$\ell:\mathbf{T}_g\times \mathbb{R}^+\to \mathbb{R}^{\mathcal{S}}$$ that associates the data of a hyperbolic metric $\sigma$ and $s\in\mathbb{R}^+$ to the set of lengths of geodesic representatives of curves in $\mathcal{S}$ with respect to the metric $s\sigma$. Every singular measured foliation $(\mathcal{F},\mu)$ on $S$ also defines a point in $\mathbb{R}^{\mathcal{S}},$ by taking $\mu$-transverse measures of simple closed curves. Furthermore, there is an injective map $$\beta: \textrm{QD}(S)\to \mathbb{R}^{\mathcal{S}}$$ that takes a quadratic differential to its vertical foliation, and then to $\mathbb{R}^{\mathcal{S}}.$ Note that both $\ell$ and $\beta$ are homogeneous with exponent $\tfrac{1}{2}$.

According to Thurston and Hubbard-Masur (see \cite{Thbook} and \cite{HMt}), both $\ell$ and $\beta$ are homeomorphisms onto their images, and $\ell(\mathbf{T}_g \times \R^+) \sqcup \beta(QD(S))$ is homeomorphic to a cone over a closed ball, which we call $\mathcal{C}$ (the cone over the Thurston compactification of Teichmuller space). The following result can be gleaned from the results of \cite{W2}.

\begin{thm} \label{Cont} For any Riemann surface $S$, let $E_S: \mathbf{T}_g \times \R^+ \sqcup QD(S) \to \R^+$ be the function that associates to each point in $\mathbf{T}_g \times \R^+$ the energy of the unique harmonic map isotopic to the identity from $S$, and to each point of $QD(S)$ the energy of the unique equivariant harmonic map to the corresponding $\R$-tree. Then $E_S$ is continuous with respect to the topology on $\mathcal{C}$.
\end{thm}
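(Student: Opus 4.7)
The plan is to deduce this continuity statement from Wolf's compactification work \cite{W,W2} together with the Hitchin-Wan-Wolf parametrization of $\mathbf{T}_g$ by $\QD(S)$. Continuity of $E_S$ on the interior $\ell(\mathbf{T}_g\times\R^+)$ is standard: the harmonic map from $S$ to a hyperbolic target depends smoothly on the target metric, and rescaling the target by $s>0$ multiplies the energy by $s$. The substance of the theorem is therefore continuity at a boundary point $\beta(\phi_0)$ with $\phi_0\in\QD(S)\setminus\{0\}$.

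By Hitchin-Wan-Wolf, the Hopf differential map $\Phi:\QD(S)\to\mathbf{T}_g$, $\psi\mapsto M_\psi$, is a real-analytic diffeomorphism. Since rescaling the target metric by $s$ multiplies the Hopf differential (and the energy) by $s$, $\Phi$ extends to a bijection $\QD(S)\setminus\{0\}\to\mathbf{T}_g\times\R^+$, under which the energy becomes $E_S(\psi)=\int_S e(h_\psi)\,dA$ for the unique harmonic map $h_\psi$ with Hopf differential $\psi$. Wolf's main result in \cite{W} is that this parametrization extends to a homeomorphism from (an appropriate closure of) $\QD(S)$ onto $\mathcal{C}$, under which a sequence $\psi_n\to\infty$ whose direction tends to $\phi_0/\|\phi_0\|$ corresponds in $\mathcal{C}$ to a sequence converging to $\beta(\phi_0)$. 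Consequently, any sequence $p_n\in\ell(\mathbf{T}_g\times\R^+)$ with $p_n\to\beta(\phi_0)$ corresponds to Hopf differentials $\psi_n\to\phi_0$ in $\QD(S)$.

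Given such $\psi_n\to\phi_0$, I would establish $E_S(\psi_n)\to 2\int_S|\phi_0|=E_S(\phi_0)$ by pointwise convergence of integrands plus dominated convergence. Writing $\mu_n$ for the Beltrami form of $h_n$, the energy density satisfies
\[
e(h_n)=\frac{2|\psi_n|}{\nu}\cosh\bigl(\log|\mu_n|^{-1}\bigr),
\]
as in the proof of Lemma~\ref{growth}. Wolf's analysis \cite{W2} shows that $|\mu_n|\to 1$ pointwise away from the zeros of $\phi_0$, and combined with $|\psi_n|\to|\phi_0|$ this gives $e(h_n)\to 2\nu^{-1}|\phi_0|$ pointwise almost everywhere. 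The monotonicity built into Lemma~\ref{growth} provides a dominating function, and dominated convergence then delivers the desired energy convergence.

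The main obstacle is the first step: translating cone convergence $p_n\to\beta(\phi_0)$ into Hopf-differential convergence $\psi_n\to\phi_0$ for arbitrary sequences, not just rays $\psi_n=t_n\phi_0$. Wolf's theorems pin down the asymptotic direction of the length function as $\psi\to\infty$ along rays, and a properness and continuous-extension argument for $\overline{\Phi}:\QD(S)\to\mathcal{C}$ is what upgrades this ray statement to the general sequence statement needed here.
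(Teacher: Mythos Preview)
Your first step---reducing cone convergence $\ell(\lambda_n,a_n)\to\beta(\phi_0)$ to convergence of rescaled Hopf differentials $a_n\phi_n\to\phi_0$ in $\QD(S)$---is exactly what the paper does via Lemma~\ref{Wo}, and your sketch of how this comes out of Wolf's compactification is correct. The gap is in your second step.

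You propose to deduce $E_S(a_n\lambda_n)\to E_S(\phi_0)$ by pointwise convergence of energy densities together with dominated convergence, citing Lemma~\ref{growth} for the dominating function. But Lemma~\ref{growth} gives monotonicity of $e(h_t)/t$ only along a \emph{single} ray $t\phi$ with $\phi$ fixed; for an arbitrary sequence with $\phi_n\to\infty$ and direction merely converging to that of $\phi_0$, there is no common ray on which to run the monotonicity, so no dominating function emerges from that lemma. Likewise, the assertion $|\mu_n|\to 1$ away from the zeros of $\phi_0$ is proved in \cite{W2} only along rays (this is precisely what is cited in the proof of Lemma~\ref{growth}) and would require an additional uniformity argument for general sequences. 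So as written, neither the pointwise convergence nor the domination is justified.

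The paper avoids both issues with the much simpler Lemma~\ref{E estimate}, a Gauss--Bonnet sandwich: since each $h_n$ is a diffeomorphism onto a hyperbolic surface of fixed topological area, one has
\[
\|a_n\phi_n\|_{L^1(S)}\;\le\;E_S(a_n\lambda_n)\;\le\;\|a_n\phi_n\|_{L^1(S)}+a_n|\chi(\Sigma_g)|,
\]
and since $a_n\to 0$ while $a_n\phi_n\to\phi_0$, both bounds converge to $\|\phi_0\|_{L^1(S)}=E_S(\phi_0)$. This is a global integral estimate and needs no pointwise control of $|\mu_n|$ or any dominating function; it is what you should substitute for your dominated convergence argument.
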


We now explain how to deduce this theorem from the paper \cite{W2}. The first ingredient is a de-projectivized version of Lemma 4.7 of that paper, whose proof is identical to the proof of the lemma in the paper.

\begin{lem} \label{Wo}
Suppose $(\lambda_n)_{n=1}^\infty\subset \mathbf{T}_g$ leaves all compact subsets of the Teichm{\"u}ller space, and let $\phi_n$ be the Hopf differential of the harmonic map from $S$ to $(S,\lambda_n)$. Suppose $(a_n)_{n=1}^\infty\subset (\mathbb{R}^+)^{\mathcal{S}}$ is a chosen sequence. Then $\ell(\lambda_n,a_n)$ converges in $\mathbb{R}^{\mathcal{S}}$ if and only if $\beta(a_n\phi_n)$ does, and in the case of convergence, the two sequences have the same limit.
\end{lem}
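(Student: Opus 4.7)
The plan is to reproduce Wolf's argument for \cite[Lemma 4.7]{W2} while retaining the non-projective information it produces. Wolf states his lemma in $\mathbb{P}\mathbb{R}^{\mathcal{S}}$, but the core estimate in its proof is a pointwise multiplicative comparison: for each fixed $\gamma \in \mathcal{S}$ and any $\lambda_n$ exiting every compact subset of $\mathbf{T}_g$, there is a universal constant $c$ (made equal to $1$ by the normalizations of $\ell$ and $\beta$ fixed in the paper) with
\begin{equation*}
\ell_{\lambda_n}(\gamma) = c\cdot i(\gamma,\mathcal{F}_{\phi_n})\bigl(1+o_\gamma(1)\bigr),
\end{equation*}
where $o_\gamma(1)\to 0$ as $n\to\infty$ for each fixed $\gamma$. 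This pointwise statement is strictly stronger than the projective conclusion Wolf draws from it.

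First I would recall Wolf's geometric comparison. As $\lambda_n$ leaves compact subsets of $\mathbf{T}_g$, the hyperbolic metric $\sigma_{\lambda_n}$ becomes long and thin along the horizontal leaves of $\phi_n$, and the $\sigma_{\lambda_n}$-geodesic representative of a fixed $\gamma$ is asymptotically concentrated on long horizontal segments connected by short transitional arcs of bounded hyperbolic length. Wolf's length calculation shows that the hyperbolic length of the horizontal segments is asymptotic to the $\phi_n$-transverse measure of $\gamma$, while the transitional arcs contribute negligibly in the ratio. Because the argument is carried out curve-by-curve, it delivers pointwise rather than merely projective convergence.

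To finish the lemma, I would multiply both sides of the asymptotic by $\sqrt{a_n}$. The $\gamma$-coordinates satisfy $\ell(\lambda_n,a_n)(\gamma)=\sqrt{a_n}\,\ell_{\lambda_n}(\gamma)$ and $\beta(a_n\phi_n)(\gamma)=\sqrt{a_n}\,i(\gamma,\mathcal{F}_{\phi_n})$, and these differ by a factor $1+o_\gamma(1)$ (after absorbing $c$). Hence for each $\gamma$ the two real sequences either both converge to the same limit or both fail to converge. Since the weak topology on $\mathbb{R}^{\mathcal{S}}$ is pointwise convergence on the countable set $\mathcal{S}$, this gives exactly the statement of the lemma. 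The main obstacle is verifying that the estimates in Wolf's Section 4 genuinely produce the multiplicative pointwise asymptotic above, rather than only its projective shadow; this is a careful reading of Wolf's argument, with no new ingredients needed beyond omitting the projectivization at the end.
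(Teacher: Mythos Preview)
Your proposal is correct and matches the paper's approach exactly: the paper simply states that this is a de-projectivized version of \cite[Lemma~4.7]{W2} ``whose proof is identical to the proof of the lemma in the paper,'' with no further argument given. Your write-up in fact supplies more detail than the paper does, correctly identifying that Wolf's curve-by-curve length comparison already yields a pointwise (not merely projective) asymptotic, and that the homogeneity of $\ell$ and $\beta$ with exponent $\tfrac{1}{2}$ lets one scale by $\sqrt{a_n}$ to get the statement in $\mathbb{R}^{\mathcal{S}}$.
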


The second ingredient is the following computation (in which each term is linear in the scalars $a_n$, so the factors of $a_n$ are superfluous).

\begin{lem}[Lemma 3.2 in \cite{W2}] \label{E estimate} In the notation of the previous lemma,
\[
||a_n\phi_n||_{L^1(S)}\leq E_S(a_n \lambda_n) \leq ||a_n\phi_n||_{L^1(S)} + a_n|\chi(\Sigma_g)|.
\]
\end{lem}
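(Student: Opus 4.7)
The plan is to derive the sandwich inequality from pointwise bounds on the energy density of the harmonic map, combined with Gauss--Bonnet. Since energy, Hopf differential, and target area are all homogeneous of degree one in the scale factor $a_n$, it suffices to treat the case $a_n=1$; the general case follows by substituting $\phi_n\mapsto a_n\phi_n$ and scaling the area of the target. Throughout, let $h:S\to(\Sigma_g,\lambda_n)$ be the harmonic map in the homotopy class of the identity, with hyperbolic target metric $\sigma$.

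The central algebraic fact is the Hopf--Jacobian identity. I would decompose the energy density pointwise as $e(h)=\mathcal{H}+\mathcal{L}$, where in local coordinates $\mathcal{H}=(\sigma(h)/\nu)|h_z|^2$ and $\mathcal{L}=(\sigma(h)/\nu)|h_{\bar z}|^2$, so that the Jacobian is $J=\mathcal{H}-\mathcal{L}$. Straight from the coordinate expression \eqref{hopf}, $\mathcal{H}\mathcal{L}=|\phi_n|^2/\nu^2$. The lower bound is then immediate from AM--GM: $e(h)\geq 2\sqrt{\mathcal{H}\mathcal{L}}=2|\phi_n|/\nu$, which integrates to $E_S(\lambda_n)\geq 2\|\phi_n\|_{L^1}$, matching the claimed bound up to the paper's normalization of $\|\cdot\|_{L^1}$.

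For the upper bound, I would invoke the classical theorem (Sampson, Schoen--Yau) that a harmonic map from a Riemann surface into a closed negatively curved surface in the class of the identity is an orientation-preserving diffeomorphism. This forces $J\geq 0$, i.e., $\mathcal{H}\geq\mathcal{L}$, and combined with $\mathcal{H}\mathcal{L}=|\phi_n|^2/\nu^2$ it gives $\mathcal{L}\leq |\phi_n|/\nu\leq \mathcal{H}$. Rewriting $e(h)=J+2\mathcal{L}$ produces the pointwise bound $e(h)\leq J+2|\phi_n|/\nu$. Integrating,
\[
E_S(\lambda_n)\;\leq\;\int_S J\,dA+2\|\phi_n\|_{L^1}\;=\;\mathrm{Area}(\Sigma_g,\sigma)+2\|\phi_n\|_{L^1},
\]
and by Gauss--Bonnet the area equals $2\pi|\chi(\Sigma_g)|$. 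Reinstating the scale factor $a_n$ multiplies each term on the right by $a_n$, yielding the claimed upper bound.

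The main obstacle is just bookkeeping the overall constants: the factors of $2$ and $2\pi$ in my derivation have to be reconciled with whatever conventions the paper uses for the $L^1$-norm of a quadratic differential and for the scalar ``$|\chi(\Sigma_g)|$'' appearing in the stated inequality. Once those normalizations are fixed, the argument is a one-line application of AM--GM plus Gauss--Bonnet, with no further analytic input beyond the existence/diffeomorphism theory for harmonic maps between surfaces.
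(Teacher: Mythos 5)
Your argument is correct and is essentially the proof of the cited result: the paper itself gives no proof of this lemma, deferring entirely to Lemma 3.2 of \cite{W2}, and Wolf's proof there is exactly your decomposition $e=\mathcal{H}+\mathcal{L}$ with $\mathcal{H}\mathcal{L}=|\phi_n|^2/\nu^2$, AM--GM for the lower bound, and $e=J+2\mathcal{L}$ plus orientation-preservation and Gauss--Bonnet for the upper bound. The only caveat is the one you already flag: as computed, your bounds read $2\|\phi_n\|\leq E\leq 2\|\phi_n\|+2\pi|\chi(\Sigma_g)|$, and since the upper bound must match the statement (not merely the lower one), you do need to check that the paper's normalization of $\|\cdot\|_{L^1}$ absorbs the factor of $2$ --- which it does, consistently with the tree energy density $2\nu^{-1}|\phi|$ giving $E_S(\phi)=\|\phi\|_{L^1(S)}$ in Theorem \ref{Cont} --- while the residual $2\pi$ versus $1$ in front of $a_n|\chi(\Sigma_g)|$ is immaterial for the application, as that term tends to zero with $a_n$ either way.
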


\begin{proof}[Proof of Theorem \ref{Cont}] For brevity, write $E = E_S$. First, $E$ is continuous on $\mathbf{T}_g \times \R^+$ and $E(\phi) = ||\phi||_{L^1(S)}$, which is certainly continuous on $QD(S)$. To show that $E$ is continuous on all of $\mathcal{C}$, we just need to show that if $\ell(\lambda_n, a_n) \to \beta(\phi)$, then $E(a_n \lambda_n) \to E(\phi)$. By Lemma \ref{Wo}, $\beta(a_n\phi_n) \to \beta(\phi)$ (where as above $\phi_n$ is the Hopf differential of the harmonic map to $\lambda_n$), and since $\beta$ is a homeomorphism onto its image, $a_n\phi_n \to \phi$ as well, so $E(a_n\phi_n) \to E(\phi)$. Finally, since $a_n$ must tend to zero in order for the sequence $a_n \lambda_n$ to converge in $\mathbb{R}^{\mathcal{S}}$, Lemma \ref{E estimate} shows that $E(a_n \phi_n)$ and $E(a_n \lambda_n)$ have the same limit.
\end{proof}
Now the proof of Lemma \ref{conv} is easy.
\begin{proof}[Proof of Lemma \ref{conv}] By definition, $E^t_\rho(S') = E_{S'}(\sigma_t)$, and $E_\rho(S') = E_{S'}(\phi)$, so by the continuity of $E_{S'}$ and its homogeneity, we just need to show that $\ell(\sigma_t/t) \to \beta(\phi)$ in $\mathcal{C}$. To prove this, we use Lemma \ref{Wo} applied to the surface $S$. Indeed, the Hopf differential of the harmonic map from $S$ to $\sigma_t/t$ is $\phi$ by construction, and since the constant sequence at $\phi$ trivially converges to $\phi$, Lemma \ref{Wo} implies that $\ell(\sigma_t/t)$ does as well.
\end{proof}

Preparations aside, we prove Theorem B1. We return to all of the notation from the introduction and the proof of Theorem B2. We don't recall it in full, but just record here that the energy functionals are $\mathbf{E}_{\rho}$ for the product of $\mathbb{R}$-trees and $\mathbf{E}_{\rho}^t$ for the product of surfaces. The proof is quite similar to that of Theorem B2, so we leave the details of the computations to the reader.
\begin{proof}[Proof of Theorem B1]
Beginning with a Riemann surface $S'$ such that $\mathbf{E}_\rho(S')<\mathbf{E}_\rho(S)$, applying Lemma \ref{conv} $n$ times yields that $\mathbf{E}_\rho^t(S')<\mathbf{E}_\rho^t(S)$ for sufficiently large $t$.

Conversely, suppose that there exists $t>0$ such that $\mathbf{E}_\rho^t(S')<\mathbf{E}_\rho^t(S)$, and let $k=(k_1^t,\dots, k_n^t): S'\to \prod_{i=1}^n M_i^t$ be the $n$-tuple of harmonic diffeomorphisms with lower energy. Let $h_i^t$ be the $i^{th}$ component of the harmonic map $h_t$, and set $f_i^t=(h_i^t)^{-1}\circ k_i^t$. Arguing similarly to the proof of Lemma \ref{2conv}, Reich-Strebel formulas (\ref{RSorig}) and (\ref{RSfor}) and the monotonicity on the level of energy densities from Lemma \ref{growth} show that for $s>t$,
\begin{align*}
    \frac{\mathbf{E}_\rho^t(S')- \mathbf{E}_\rho^t(S)}{t}&=\frac{\sum_{i=1}^n\mathcal{E}_\rho^t(S',h_i^t\circ f_i^t)- \mathbf{E}_\rho^t(S)}{t} \\
    &>  \frac{\sum_{i=1}^n\mathcal{E}_\rho^s(S',h_i^s\circ f_i^t)- \mathbf{E}_\rho^s(S)}{s} \\
    &> \sum_{i=1}^n\mathcal{E}_\rho(S',\pi\circ \tilde{f}_i^t)- \mathbf{E}_\rho(S).
    \end{align*}
    It follows from the minimizing property that  $$\frac{\mathbf{E}_\rho^t(S')- \mathbf{E}_\rho^t(S)}{t}> \frac{\mathbf{E}_\rho^s(S')- \mathbf{E}_\rho^s(S)}{s}$$ and  $$\frac{\mathbf{E}_\rho^t(S')- \mathbf{E}_\rho^t(S)}{t}>\mathbf{E}_\rho(S')- \mathbf{E}_\rho(S),$$ and hence the result follows.
\end{proof}

\end{subsection}
\end{section}

\begin{section}{Unstable equivariant minimal surfaces in $\mathbb{R}^n$}
We recall the setup of Theorem C. For $n \geq 2$ and $i=1,\dots, n$, let $\alpha_i$ be nonzero holomorphic $1$-forms on $S$ such that $\sum_{i=1}^n \alpha_i^2=0$. Let $\chi$ be the action of $\pi_1(S)$ on $\R^n$ corresponding to the 1-forms $\alpha_i$, and let $\rho$ be the action of $\pi_1(S)$ on a product $X = \prod_i (T_i, 2d_i)$ of trees corresponding to the quadratic differentials $\phi_i=\alpha_i^2$. We write $\mathcal{E}_\chi$ and $\mathcal{E}_\rho$ for the associated two-variable energies, and $\mathbf{E}_\chi$ and $\mathbf{E}_\rho$ for the energy functionals on Teichm{\"u}ller space. Let $h = (h_1, \ldots, h_n)$ and $\pi = (\pi_1, \ldots, \pi_n)$ be the $\chi$- and $\rho$- equivariant minimal maps respectively. 
\begin{subsection}{Isometric folding}
We begin with the statement (1) from Theorem C. The result is a consequence of the proposition below.
\begin{prop}\label{c1}
$\mathbf{E}_\rho\geq \mathbf{E}_\chi,$ with equality at $S$.
\end{prop}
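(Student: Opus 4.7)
The plan is to construct an explicit ``folding'' map $\ell: \prod_i T_i \to \mathbb{R}^n$ through which the map $h$ factors, and then exploit its $1$-Lipschitz property to compare the two energy functionals.

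First I would define $\ell_i: T_i \to \mathbb{R}$ by observing that $\mathrm{Re}\int\tilde\alpha_i$ is locally constant along the leaves of the vertical foliation of $\tilde\phi_i = \tilde\alpha_i^2$ (since $\tilde\alpha_i$ is purely imaginary along such leaves), and hence descends to a well-defined function on the leaf space. Setting $\ell = (\ell_1,\dots,\ell_n)$, we have $h = \ell\circ \pi$ tautologically, and $\ell$ intertwines $\rho$ with the translation action $\chi$ on $\mathbb{R}^n$. A direct check in local flat coordinates where $\alpha_i = dz$ --- the vertical leaves are $\{\mathrm{Re}(z)=\mathrm{const}\}$, the transverse measure is $|dx|$, and $\ell_i$ is simply the coordinate $x$ --- shows that $\ell_i$ is $1$-Lipschitz with respect to the rescaled metric $2d_i$ and is a local isometry at every non-vertex point of $T_i$. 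The preimage in $\tilde S$ of the vertex set of each tree is the zero set of $\tilde\alpha_i$, which has measure zero in $S$.

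With $\ell$ in hand, the global inequality is immediate. For any Riemann surface structure $S'$, let $\pi^{S'}: \tilde S' \to \prod_i(T_i, 2d_i)$ be the $\rho$-equivariant harmonic map. Then $\ell\circ\pi^{S'}$ is a $\chi$-equivariant Lipschitz competitor for $\mathbf{E}_\chi(S')$, and the $1$-Lipschitz property of $\ell$ yields the pointwise bound $e(\ell\circ\pi^{S'}) \leq e(\pi^{S'})$. Integrating and using the minimizing property of the $\chi$-equivariant harmonic map $h^{S'}$,
\[
\mathbf{E}_\chi(S') \;\leq\; \mathcal{E}_\chi(S',\,\ell\circ\pi^{S'}) \;\leq\; \mathcal{E}_\rho(S',\pi^{S'}) \;=\; \mathbf{E}_\rho(S').
\]

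For the equality at $S$, the relevant harmonic map to the tree is just $\pi$, and by construction $\ell\circ\pi = h$; the local-isometry property of $\ell$ off the vertex set forces $e(h) = e(\pi)$ almost everywhere on $S$, so $\mathcal{E}_\chi(S,h) = \mathcal{E}_\rho(S,\pi)$, which combined with the minimizing property of both $h$ and $\pi$ gives $\mathbf{E}_\chi(S) = \mathbf{E}_\rho(S)$. The only real obstacle in this plan is the normalization bookkeeping in the first step: the factor-of-two rescaling of the tree metric from $d_i$ to $2d_i$ is exactly what aligns the Hopf-differential conventions so that $\ell_i$ emerges as a $1$-Lipschitz local isometry on edges (rather than with some stray rescaling factor), and this has to be checked in tandem with the normalization that gives $h_i$ Hopf differential $\phi_i = \alpha_i^2$.
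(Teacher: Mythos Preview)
Your proof is correct and takes essentially the same approach as the paper: both construct the same folding map (the paper calls it $F$) by observing that $h_i$ is constant along vertical leaves and hence descends to the tree, then compare energies via the minimizing property of harmonic maps. The only difference is that the paper's Lemma~\ref{isomf} proves the sharper statement that $e(F_i\circ\pi_i') = e(\pi_i')$ pointwise for \emph{every} $S'$ (using that the harmonic map to the tree locally factors through an arc on which $F_i$ is an isometric embedding, so in fact $\mathcal{E}_\chi(S', F\circ\pi') = \mathbf{E}_\rho(S')$), whereas you use only the $1$-Lipschitz bound for general $S'$; your weaker inequality already suffices for the proposition.
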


The key is that there is a natural map $F: X \to \R^n$ intertwining $\rho$ and $\chi$. To see why, let's focus on a single tree $T_i$. Along a curve parametrizing a non-singular leaf for the vertical singular foliation of $\phi_i$, $\alpha_i$ evaluates the tangent vectors to purely imaginary numbers. Since $dh_i = \mathrm{Re}(\tilde{\alpha}_i)$, we deduce that $h_i$ is constant along the singular vertical foliation of $\tilde{\phi}_i$. Hence, $h_i$ descends to a map $F_i: T_i \to \R$, which we call the folding map of the tree. The map $F = (F_1, \ldots, F_n)$ has the required equivariance. 
 
\begin{lem}\label{isomf} If $S'$ is any point of $\mathbf{T}_g$, and $\pi_i'$ the unique $\rho_i$-equivariant harmonic map from $\tilde{S}'$ to $(T_i,2d_i)$, then the energy density of $\pi'_i$ is pointwise equal to the energy density of $F_i \circ \pi_i'$. 
\end{lem}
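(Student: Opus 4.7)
The plan is to reduce the lemma to two facts: that $F_i$ is a local isometry on the interior of each edge of $(T_i, 2d_i)$, and that any harmonic map into the tree has a continuous energy density governed by $2(\nu')^{-1}|\psi'_i|$. Combining these, the pointwise equality of energy densities reduces to an almost-everywhere equality on the dense open set of regular points of $\psi'_i$, extended by continuity.

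The local isometry property of $F_i$ can be verified directly in coordinates. Near a non-singular point of the vertical foliation of $\phi_i$, pick a flat coordinate $z$ on $\tilde{S}$ with $\tilde{\alpha}_i = dz$, so that $\tilde{\phi}_i = dz^2$ and the vertical leaves are the lines $\mathrm{Re}(z) = \text{const}$. The leaf space is locally parametrized by $x = \mathrm{Re}(z)$, the $d_i$-distance is the transverse measure $|\mathrm{Re}\sqrt{\phi_i}| = |dx|$, and $\tilde{h}_i = \mathrm{Re}\int dz = x$ changes by the same amount between leaves. With the paper's normalization of $h_i$ and the $2d_i$-metric, this shows that $F_i$ restricts to an isometric embedding on each edge of $T_i$.

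For the equality of energy densities, let $\psi'_i$ be the Hopf differential of $\pi'_i$. At any point $z$ where $\psi'_i(z) \neq 0$, Section 2.2 tells us that $\pi'_i$ factors locally, isometrically, through an embedded segment in $T_i$. On this segment, $F_i$ is either an isometry (if the segment lies in the interior of an edge) or a piecewise isometry with a single corner (if the segment crosses a vertex). In either case the almost-everywhere-defined gradient magnitude of $F_i \circ \pi'_i$ equals that of $\pi'_i$, giving $e(F_i \circ \pi'_i) = e(\pi'_i)$ away from the discrete zero set of $\psi'_i$. Since $e(\pi'_i) = 2(\nu')^{-1}|\psi'_i|$ is continuous on $\tilde{S}'$, and $F_i \circ \pi'_i$ is a Lipschitz real-valued Sobolev map whose $L^\infty$ energy density admits a continuous representative, this almost-everywhere equality upgrades to pointwise equality on all of $\tilde{S}'$; in particular the two energy densities agree at zeros of $\psi'_i$ (where both vanish) and at any regular point mapping to a vertex.

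The main obstacle is the passage from almost-everywhere to pointwise equality, which relies on the continuity of the density formula for tree-valued harmonic maps and careful handling of the measure-zero set of regular points whose image happens to be a vertex. The coordinate computation of Step 1 is routine once the factor of $2$ between $d_i$ and $2d_i$ and the corresponding convention for $h_i$ are properly tracked.
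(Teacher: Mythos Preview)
Your proof is correct and follows the same approach as the paper: locally factor $\pi_i'$ through an interval isometrically embedded in the tree, observe that $F_i$ restricted to that interval preserves length, and extend to all of $\tilde{S}'$ by continuity of the energy density. You are in fact slightly more careful than the paper, which asserts that $F_i|_{\iota(I)}$ is an isometric embedding without addressing the possibility that $\iota(I)$ contains a vertex of $T_i$; your treatment of this case as a piecewise isometry (so that the gradient magnitude is still preserved almost everywhere) is the right patch, and the continuity argument you give then closes the gap.
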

\begin{proof}
Let $\psi_i$ be the Hopf differential of $\pi_i'$. As discussed in Section 2, for any point $p$ at which $\psi_i(p)\neq 0$, there exists a neighbourhood $\Omega$ of $p$, an open interval $I\subset\mathbb{R}$, a map $\hat{\pi}_i':\Omega\to I$, and an isometric inclusion $\iota:I\to (T_i,2d_i)$ such that in $\Omega,$ $$\pi_i'=\iota\circ \hat{\pi}_i'.$$ By construction, the restriction of $F_i|_{\iota(I)}:\iota(I)\to \mathbb{R}$ is an isometric embedding. It follows by continuity that the energy densities are equal everywhere.
\end{proof}
\begin{proof}[Proof of Proposition \ref{c1}]
For any $S' \in \mathbf{T}_g$, let $\pi'$ be the $\rho$-equivariant harmonic map to the product of trees. The map $F\circ \pi'$ is a $\chi$-equivariant Lipschitz map to $\R^n$. By the minimizing property for harmonic maps, $$\mathbf{E}_\chi(S')\leq \mathcal{E}_\chi(S',F\circ \pi').$$
By the lemma above, $\mathcal{E}(S',F\circ \pi')=\mathbf{E}_\rho(S'),$ so we have $$\mathbf{E}_\rho \geq \mathbf{E}_\chi.$$ Working on the Riemann surface $S$, $h_i=F_i\circ \pi_i$ for every $i$, so $\mathbf{E}_\chi(S)= \mathcal{E}_\chi(S',F\circ \pi),$ and we have equality.
\end{proof}
\begin{remark}
Maps of the form $F_i\circ \pi_i'$ above are subtle. They are harmonic apart from some preimages under $\pi_i'$ of the vertices in $(T_i,2d_i)$, which are typically disjoint arcs or connected sums of disjoint arcs. Even though they have finite total energy, a Weyl lemma cannot be applied because they fail to be twice weakly differentiable on these lines. The map $x\mapsto |x|$ on $\mathbb{R}$  exhibits this type of behaviour.
\end{remark}

We see immediately from Proposition \ref{c1} that if $S$ is not a global (resp. local) minimum of $\mathbf{E}_\rho$, then it is not a global (resp. local) minimum of $\mathbf{E}_\chi$. So (1) is proved. Furthermore, we are very close to proving one direction of (2), once we recall the definition of the self-maps index, and its basic properties. We do this after collecting some standard facts about minimal surfaces in $\R^n$.
\end{subsection}

\begin{subsection}{Energy and area}
Let $f$ be any smooth $\chi$-equivariant map from $\tilde{\Sigma}_g$ to $\R^n$. The differential of $f$ descends to a closed $\R^n$-valued 1-form $\theta$ on $\Sigma_g$, and the cohomology class of $\theta$ is prescribed by the representation $\chi$. The map $f$ also defines a $\pi_1(\Sigma_g)$-invariant area form $dA_f = \sqrt{\mathrm{det}(\theta^T\theta)}$, and the area of $f$, which we write $A(f)$, is defined to be the integral of this form over $\Sigma_g$. If $S$ is a Riemann surface structure on $\Sigma_g$, then $\mathcal{E}_\chi(S,f) \geq A(f)$, with equality if and only if $f$ is minimal (in fact, the integrands are equal pointwise).

Now suppose we are in the setting of Theorem C, so that $h$ is a minimal $\chi$-equivariant map from $\tilde{S}$ to $\R^n$. Let $B$ be the branch locus of $h$ on $S$. 

\begin{lem}\label{atoe} Let $h_r$ be a smooth $\chi$-equivariant variation of $h$ such that $h_r = h$ in a neighborhood of $B$. Then for $r$ small enough, there is a smooth variation of Riemann surface structures $S_r$ such that $h_r$ is minimal with respect to $S_r$.
\end{lem}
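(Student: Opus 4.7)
My reading of the conclusion is that $h_r$, viewed as a map from the varying Riemann surface $S_r$ into $\mathbb{R}^n$, should be weakly conformal: by the energy--area comparison recalled just above the lemma, equality $\mathcal{E}_\chi(S_r, h_r) = A(h_r)$ is precisely what the paper means by ``$h_r$ is minimal with respect to $S_r$''. With that in mind, the plan is to build $S_r$ by patching. Near $B$, where by hypothesis $h_r = h$, I would simply keep the original complex structure $S$; away from $B$, where $h_r$ is an immersion, I would take $S_r$ to be the pullback conformal structure determined by $h_r$ and the Euclidean metric on $\mathbb{R}^n$.

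To carry this out, first fix nested open neighborhoods $U_0 \subset \overline{U_0} \subset U$ of $B$ with $h_r = h$ on $U$ for all $r$, and set $V = \Sigma_g \setminus \overline{U_0}$. Since $\overline{V}$ is compact and disjoint from $B$, $h|_{\overline{V}}$ is an immersion, and because the immersion condition is open in the $C^1$ topology and $h_r \to h$ smoothly, $h_r|_{\overline{V}}$ is also an immersion for $r$ small. On $V$, define $S_r$ to be the complex structure in the conformal class of the smooth metric $h_r^* g_{\mathrm{Euc}}$; this depends smoothly on $r$ since $h_r$ does. On $U$, set $S_r := S$. The two definitions agree on $U \cap V$: because $h_r = h$ there and $h$ is itself weakly conformal with respect to $S$ (by minimality of the starting map), the pullback conformal structure coincides with $S$ on $U\cap V$. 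Patching produces a smooth family of complex structures on $\Sigma_g$ with $S_0 = S$, and by construction $h_r$ is weakly conformal with respect to $S_r$ on each piece, hence on all of $\Sigma_g$.

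The only genuine subtlety is the treatment of the branch locus: the pullback conformal structure does not extend across points where $dh_r$ degenerates, and this is precisely why one must assume $h_r = h$ on a neighborhood of $B$. With that hypothesis in hand, the remaining ingredients---openness of the immersion condition on a compact set, smooth dependence of the pullback metric on the map, and automatic compatibility on the overlap---are routine.
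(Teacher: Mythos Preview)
Your argument is correct and follows the same route as the paper's proof: pull back the Euclidean conformal structure via $h_r$ where it is an immersion, keep the original structure near $B$, and observe that the two agree on the overlap because $h_r=h$ there and $h$ is conformal for $S$. You have simply made the compactness and patching steps more explicit than the paper does.
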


\begin{proof}
For $r$ sufficiently small, the map $h_r$ is still an immersion away from $B$, and hence uniquely defines a new conformal structure on $S - B$. Since $X$ is compactly supported away from $B$, this conformal structure patches to the conformal structure of $S$ near $B$, and defines a new conformal structure $S_r$ on $S$, with respect to which $h$ is minimal.
\end{proof}

We say that a smooth $\R^n$-valued vector field $W$ on $S$ supported on $S-B$ is a normal variation of $h$ if it is perpendicular to the image of $dh$ at each point of $S - B$. For any such $W$, let $\tilde{W}$ be the pullback to $\tilde{S}$; then the family $h_r = h + r\tilde{W}$ is a $\chi$-equivariant deformation of $h$ equal to $h$ on a neighborhood of $B$. Taking the derivative of the corresponding $S_r$ at $r=0$ defines a linear map from the space of normal variations supported on $S-B$ to the tangent space of Teichm\"uller space at $S$. Let $V$ the graph of this map, viewed as a subspace of $T_S\mathbf{T}_g \times T_h \mathrm{Map}_\chi(\tilde{\Sigma}_g, \R^n)$. We have shown that restricted to $V$, the Hessian of $\mathcal{E}_\chi$ at the critical point $(S, h)$ is equal to the Hessian of $A$ at the critical point $h$. The latter has the following formula:

\begin{prop}[Theorem 32 in \cite{Law}] If $W$ is a normal variation supported in $S-B$, the second derivative of the area of any equivariant variation $h_r$ with derivative $W$ at $r=0$ is given by the quadratic form
\begin{equation}\label{2v}
   Q(W) = \int_S |(d W)^N|^2 - |\langle k, W \rangle|^2 
\end{equation}
where $(dW)^N$ is the component of $dW$ normal to the image of $dh$, $k$ is the vector-valued second fundamental form of $h(S)$, and the second term is interpreted as the square norm of the scalar-valued 2-tensor $\langle k, W \rangle$.
\end{prop}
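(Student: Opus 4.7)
The plan is to carry out the standard second-variation-of-area computation for a minimal immersion into Euclidean space. Thanks to $\chi$-equivariance, the pullback metric $g = h^*\delta_{\R^n}$, the normal field $W$, and the acceleration $Z = \partial_r^2 h_r|_{r=0}$ all descend to objects on the compact surface $S$, so all integrations by parts take place on $S$ without boundary terms. Starting from $A(h_r) = \int_S \sqrt{\det g_r}\,du$ in local coordinates $u^i$, and writing $h_r = h + rW + \tfrac{r^2}{2}Z + O(r^3)$, I compute $\dot g_{ij} = \langle\partial_i W, \partial_j h\rangle + \langle\partial_i h, \partial_j W\rangle$. Differentiating the normality identity $\langle W, \partial_j h\rangle \equiv 0$ gives $\langle\partial_i W, \partial_j h\rangle = -\langle W, k_{ij}\rangle$, so $\dot g_{ij} = -2\langle W, k_{ij}\rangle$ and $\tr_g\dot g = -2\langle W, H\rangle = 0$ since $h$ is minimal. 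This already kills the first-order term of the area.

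For the second order, combining $\det(I+B) = 1 + \tr B + \tfrac{1}{2}((\tr B)^2 - \tr B^2) + O(B^3)$ with the Taylor expansion of the square root yields
\begin{equation*}
\partial_r^2\big|_{r=0}\sqrt{\det g_r/\det g} \;=\; \tfrac{1}{2}\tr_g\ddot g \;-\; \tfrac{1}{2}\tr_g(\dot g\cdot\dot g) \;+\; \tfrac{1}{4}(\tr_g\dot g)^2,
\end{equation*}
whose last term is zero. Substituting $\dot g_{ij} = -2\langle W, k_{ij}\rangle$ into the middle term and reading off the definition of the square norm of the scalar-valued 2-tensor $\langle k, W\rangle$ (working in an orthonormal frame) gives $\tfrac{1}{2}\tr_g(\dot g\cdot\dot g) = 2|\langle k, W\rangle|^2$. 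For $\ddot g$, one computes $\ddot g_{ij} = 2\langle\partial_i W, \partial_j W\rangle + \langle\partial_i Z, \partial_j h\rangle + \langle\partial_i h, \partial_j Z\rangle$; the $W$-piece contributes $|dW|^2$ to $\tfrac{1}{2}\tr_g\ddot g$, while the $Z$-piece contributes $g^{ij}(\langle\partial_i Z, \partial_j h\rangle + \langle\partial_i h, \partial_j Z\rangle)$, which integrates to $-2\int_S \langle Z, \Delta_g h\rangle\,dA = 0$ after integration by parts, because the Laplace–Beltrami of the immersion equals the mean curvature vector $H = 0$.

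Hence the integrand of the second variation is $|dW|^2 - 2|\langle k, W\rangle|^2$. To reach the stated formula, decompose $\partial_i W = (\partial_i W)^T + (\partial_i W)^N$; the normality identity identifies the tangential part as $(\partial_i W)^T = -g^{kl}\langle W, k_{il}\rangle\,\partial_k h$, whence $|(dW)^T|^2 = |\langle k, W\rangle|^2$. Therefore $|dW|^2 = |(dW)^N|^2 + |\langle k, W\rangle|^2$, and substituting gives precisely $Q(W) = \int_S (|(dW)^N|^2 - |\langle k, W\rangle|^2)\,dA$. The main obstacle is the bookkeeping: the specific coefficients coming out of $\tfrac{1}{2}\tr_g\ddot g$ and $\tfrac{1}{2}\tr_g(\dot g\cdot\dot g)$ must interact with the tangential–normal split of $dW$ so that the excess $|\langle k, W\rangle|^2$ in $|dW|^2$ cancels one copy of $2|\langle k, W\rangle|^2$ and leaves the other with the correct negative sign. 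Equally important is verifying that every $Z$-dependent contribution drops out — this is the one place where minimality of $h$ is essential beyond killing the first-order term.
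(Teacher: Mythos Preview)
The paper does not supply its own proof of this proposition; it simply cites Theorem~32 of \cite{Law} and moves on. Your computation is the standard second-variation-of-area argument for minimal immersions into Euclidean space, and it is correct: the expansion of $\sqrt{\det g_r}$, the identification $\dot g_{ij} = -2\langle W, k_{ij}\rangle$, the vanishing of the $Z$-terms via $\Delta_g h = H = 0$, and the tangential--normal split $|dW|^2 = |(dW)^N|^2 + |\langle k,W\rangle|^2$ all go through as you describe. One small bookkeeping slip: the $Z$-piece of $\tfrac{1}{2}\tr_g\ddot g$ is $\tfrac{1}{2}g^{ij}(\langle\partial_i Z,\partial_j h\rangle + \langle\partial_i h,\partial_j Z\rangle) = g^{ij}\langle\partial_i Z,\partial_j h\rangle$, not twice that; but since it integrates to zero anyway this does not affect the conclusion. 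You might also remark explicitly that because $W$ is compactly supported in $S-B$, the pullback metric $g$ is nondegenerate on the support of $W$, so the computation in local coordinates is legitimate there and the integrand vanishes elsewhere.
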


\end{subsection}

\begin{subsection}{Lifting to $\mathbb{R}$-trees via self-maps}
In this section, we study energy and area in the context of the $\rho$-equivariant harmonic maps to products of $\mathbb{R}$-trees. Specifically, we relate $Q$ to the quadratic form $\mathbf{L}:T_S\mathbf{T}_g\times H^0(S;TS)^n\to\mathbb{R}$ defined in the introduction, which defines the self-maps index for $\mathbf{E}_\rho$. Let $H_c^0(S - B,TS)$ be the subspace of $H^0(S, TS)$ of smooth vector fields supported on $S-B$. 

 The key to the proof of the second part of Theorem C is the result below.
\begin{lem} \label{liftingtt} Suppose that $W$ is a normal variation of $S$ with support in $S-B$, and such that $Q(W)<0.$ Then there exists a harmonic Beltrami form $\mu$ on $S$ and vector fields $V_1,\dots, V_n\in H^0_c(S-B,TS)^n$ such that $$\mathbf{L}(\mu,V_1,\dots,V_n)<0.$$
\end{lem}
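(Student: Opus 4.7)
The strategy is to directly match $\mathbf{L}$ against $Q$ via two observations. First, Lemma~\ref{isomf} extends to Lipschitz maps of the form $\pi_i\circ\tilde{f}_i^r$ (which factor locally through the isometric embedding coming from the harmonic $\pi_i$), so
\[
\mathcal{E}_\rho(S_r,\pi\circ\tilde{f}^r) = \mathcal{E}_\chi(S_r, h\circ\tilde{f}^r).
\]
Second, since $(S,h)$ is a critical point of $\mathcal{E}_\chi$, the second derivative along any variational path at $r=0$ depends only on the first-order tangent of the path. If I can choose $V_i$ with $dh_i(V_i) = W_i$, then the tangent of the path $r\mapsto (S_r, h\circ\tilde{f}^r)$ matches the tangent of the area-preserving path $r \mapsto (S_r, h+r\tilde{W})$ provided by Lemma~\ref{atoe}, and the latter has second derivative $Q(W)$ because energy equals area along a family of minimal maps.

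The obstruction to this plan is that solving $dh_i(V_i) = W_i$ requires $dh_i = \mathrm{Re}(\alpha_i)$ to be nonvanishing on the support of $W_i$. Let $Z := \bigcup_i \{\alpha_i = 0\}$; using $\sum\alpha_i^2 = 0$ one can check that if $\alpha_i(p) = c_i\lambda$ with $c_i\in\R$, $\lambda\in\C$, not all zero, then $\lambda = 0$, so $B$ is exactly the common zero set of the $\alpha_i$ and in general $B \subsetneq Z$. To push $W$ off $Z$, I multiply by a logarithmic cutoff $\chi_\epsilon$ that vanishes on an $\epsilon^2$-neighborhood of $Z$ and equals $1$ outside an $\epsilon$-neighborhood; the dimension-two bound $\|d\chi_\epsilon\|_{L^2}^2 = O(|\log\epsilon|^{-1})$ makes the cross and error terms produced in $Q(\chi_\epsilon W)$ negligible, so $Q(W')\to Q(W)$ as $\epsilon\to 0$ for $W' := \chi_\epsilon W$, and in particular $Q(W') < 0$ for $\epsilon$ small.

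With $W'$ supported in $S - B - Z$, I define $V_i$ on $\mathrm{supp}(W_i')$ as the unique vector field perpendicular to $\ker(dh_i)$ (in any background metric) satisfying $dh_i(V_i) = W_i'$, extended by zero. Since $W_i'$ is smooth with compactly supported derivatives vanishing at the boundary of its support, $V_i$ is smooth and lies in $H^0_c(S-B,TS)$. Lemma~\ref{atoe} yields a smooth path of Riemann surfaces $r\mapsto S_r$ tangent at $r=0$ to some Beltrami form $\mu$ such that $h_r := h + r\tilde{W}'$ is minimal on $S_r$. Combining the two observations at the start,
\[
\mathbf{L}(\mu,V_1,\ldots,V_n) = \frac{d^2}{dr^2}\bigg|_{r=0}\mathcal{E}_\chi(S_r, h\circ\tilde{f}^r) = \frac{d^2}{dr^2}\bigg|_{r=0}\mathcal{E}_\chi(S_r, h_r) = \frac{d^2}{dr^2}\bigg|_{r=0}A(h_r) = Q(W') < 0,
\]
where the second equality is the critical-point-matching-tangents principle and the third uses that $h_r$ is minimal on $S_r$. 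Since $\mathbf{L}$ depends only on the class of $\mu$ in $T_S\mathbf{T}_g$, I finally replace $\mu$ with its harmonic representative, completing the proof.

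The main obstacle is the cutoff step: a naive cutoff would introduce a term comparable to $|d\chi_\epsilon|^2 |W|^2$ in the $|(dW')^N|^2$ part of $Q(W')$, which would swamp the strict negativity. The logarithmic cutoff available only in two dimensions is precisely what keeps this error $o(1)$ and preserves $Q(W')<0$.
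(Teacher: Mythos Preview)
Your argument is correct and follows essentially the same route as the paper: build tangential vector fields $V_i$ on $S$ solving $dh_i(V_i)=W_i$ (the paper writes this as $V_i = W^i\,\nabla x^i/|\nabla x^i|^2$), use the folding relation $F_i\circ\pi_i = h_i$ to identify $\mathcal{E}_\rho(S_r,\pi\circ\tilde f^r)$ with $\mathcal{E}_\chi(S_r,h\circ\tilde f^r)$, and invoke Lemma~\ref{atoe} together with the energy--area equality to land on $Q$. Your use of the critical-point principle to pass from the path $r\mapsto h\circ\tilde f^r$ to the linear path $r\mapsto h+r\tilde W'$ is a clean way to finish; the paper instead applies Lemma~\ref{atoe} directly to $H_r=(h_1\circ f_1^r,\dots,h_n\circ f_n^r)$, but the two variants are equivalent.

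The one substantive difference is your extra log-cutoff near $Z=\bigcup_i\{\alpha_i=0\}$. The paper's proof asserts that $\nabla x^i$ is nonvanishing on $S-B$ and proceeds directly, but as you correctly observe, an individual $\alpha_i$ can vanish at points outside the branch locus $B$ (your argument that $B$ equals the common zero set of the $\alpha_i$ is right), and at such a point $W^i$ need not vanish since $e_i$ lies in the normal space there. So your additional cutoff is a genuine refinement that patches a point the paper glosses over, not a different strategy. The log-cutoff estimate you cite is exactly Lemma~\ref{cftrick} applied to the finite set $Z$ rather than $B$, and the constants behave identically.
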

\begin{proof}
Denote the coefficients of $W$ by $W^i$. For each $i = 1, \ldots, n$, let $V_i$ be the vector field which vanishes on $B$ and is equal to $W^i \nabla x^i / |\nabla x_i|^2$ on $S-B$, where $\nabla x^i$ is the gradient of the coordinate function $x^i$ on $S$, which is nonvanishing on $S - B$. We point out that $V_i$ has compact support on $S - B$. 

Let $f_i^W: \R \times S \to S$ be flow of $V_i$, so that $f_i^W(r,\cdot)=f_i^r(\cdot)$. Then the family $H: \R \times \tilde{S} \to \R^n$ defined by $H_i(r, p) = h_i\circ f_i^r(p)$ has derivative $W$ at time zero. Moreover, the family $\Pi: \R \times \tilde{S} \to \prod_i (T_i,2d_i)$ defined by $\Pi_i(r,p) = \pi_i\circ f_i^r(p)$ satisfies $F_i \circ \Pi_i = H_i$, where $F_i$ is the folding map from $T_i$ to $\R$. Let $\pi_r$ be the map $(\pi_1\circ f_1^r,\dots, \pi_n\circ f^r).$ By Lemma \ref{atoe}, there exists a $C^\infty$ variation of conformal structures $r\mapsto S_r$ along which $\mathcal{E}_\rho(S_r,\pi_r)=\mathcal{E}_\chi(S_r,h_r)=A(h_r),$ and we set $\mu$ to be the Beltrami form in $T_S\mathbf{T}_g$ tangent to this path at time zero. If $Q(W)<0$, then taking the second variation of $r\mapsto\mathcal{E}_\rho(S_r,\pi_r)$ yields $\mathbf{L}(\mu,V_1,\dots, V_n)=Q(W)<0.$
\end{proof}

\end{subsection}

\begin{subsection}{Log cutoff}

In order to construct destabilizing variations for $Q$, it is helpful to do away with the condition that $W$ is supported on $S - B$. First, we need to say what it means for $W$ to be a normal variation over all of $S$. The map $S - B \to \C\mathbb{P}^{n-1}$, which sends $p$ to the (one-dimensional) image of $(\alpha_1, \ldots, \alpha_n)$ at $p$, extends holomorphically to all of $S$ by clearing denominators. Thus, the normal bundle also extends analytically to all of $S$. The quadratic form $Q$ is still finite for normal variations that are not necessarily supported on $S - B$.

For normal variations $W$, which are not necessarily supported on $S-B,$ we will need to show that one can replace them with variations that are supported on $S-B$ without changing the value of $Q$ too much. This is the log cut-off trick. If $r$ is the radial coordinate in $\mathbb{C}$ then the function $\log(r)/\log(\delta^{-1}) + 2$, defined between $r  = \delta$ and $r = \delta^2$, is equal to $1$ for $r= \delta$ and 0 for $r = \delta^2$ and has Dirichlet energy
\begin{equation} \label{eqn: cutoff}
\begin{split}
\frac{1}{2}\int_{\delta^2}^\delta \frac{2 \pi}{r\log(\delta^{-1})^2} = \frac{\pi}{\log(\delta^{-1})}.
\end{split}
\end{equation}
The point is this this tends to zero as $\delta$ goes to zero. A good picture is that $\log r$ is the height coordinate on a cylinder conformal to the punctured disk, so our function is an affine function of the height of the cylinder, and its derivative is small. The extension of this function by 0 and 1 is Lipschitz. For very minor reasons, it will be convenient to use a smooth cutoff function, so we let $l_\delta(r)$ be a perturbation of $\log(r)/\log(\delta^{-1}) + 2$ which extends smoothly by 0 and 1 and has Dirichlet energy no more than $2\pi/\log(\delta^{-1})$.

We use this model to define a cut-off function as follows. For each point $p_i$ of $B$, fix a holomorphic coordinate $z_i$ with $z_i(p_i) = 0$. Then, for any value of $\delta$ small enough that each $z_i$ is defined on the ball of radius $\delta$ around $p_i$ and these balls do not overlap, let $\eta_\delta$ be the function on $S$ defined by
\begin{itemize}
\item $\eta_\delta(p) = l_\delta(|z_i|)$ if $\delta^2 \leq |z_i(p)| \leq \delta$ for some $i$
\item $\eta_\delta(p) = 0$ if $|z_i(p)| \leq \delta^2$ for some $i$
\item $\eta_\delta(p) = 1$ otherwise.
\end{itemize}

We now use the log cut-off trick to prove the following lemma.

\begin{lem}\label{cftrick} Suppose that $W$ is normal variation of $h$ on $S$. Then given any $\epsilon > 0$, there is a constant $d(\epsilon, Q(W), \sup |W|)$ such that for all $\delta < d$,
\begin{equation}\label{lcf}
    |Q(\eta_\delta W) - Q(W)| < \epsilon.
\end{equation}
\end{lem}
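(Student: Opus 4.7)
The plan is to expand $Q(\eta_\delta W)$ via the Leibniz rule and write the difference from $Q(W)$ as a sum of three manageable pieces. Writing $d(\eta_\delta W) = d\eta_\delta\otimes W + \eta_\delta\, dW$, and using that $W$ is already normal to the image of $dh$, we obtain $(d(\eta_\delta W))^N = d\eta_\delta \otimes W + \eta_\delta (dW)^N$, while $\langle k,\eta_\delta W\rangle = \eta_\delta \langle k, W\rangle$. Substituting into the definition of $Q$ and cancelling the terms already in $Q(W)$ yields
\begin{align*}
Q(\eta_\delta W) - Q(W) &= \int_S |W|^2\, |d\eta_\delta|^2 \;+\; 2\int_S \eta_\delta\,\langle d\eta_\delta\otimes W,(dW)^N\rangle \\
&\quad + \int_S (\eta_\delta^2-1)\bigl(|(dW)^N|^2 - |\langle k,W\rangle|^2\bigr)\\
&=: I_1 + I_2 + I_3.
\end{align*}

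I would then show that each $I_j \to 0$ as $\delta \to 0$, with rate controlled by $\sup|W|$, $Q(W)$, and fixed geometric data. For $I_1$, since $d\eta_\delta$ is supported on the $|B|$ disjoint annuli $\{\delta^2\leq|z_i|\leq\delta\}$, the cutoff estimate (\ref{eqn: cutoff}) and its smooth version give $\int_S|d\eta_\delta|^2 \leq 4\pi|B|/\log(\delta^{-1})$, so $|I_1| \leq 4\pi|B|(\sup|W|)^2/\log(\delta^{-1})$. For $I_2$, Cauchy--Schwarz gives $|I_2|\leq 2\sqrt{I_1}\cdot\|(dW)^N\|_{L^2(S)}$, and the identity
\begin{equation*}
\|(dW)^N\|_{L^2}^2 = Q(W) + \int_S |\langle k,W\rangle|^2 \leq Q(W) + (\sup|W|)^2\int_S |k|^2
\end{equation*}
shows that the second factor is bounded in terms of $Q(W)$, $\sup|W|$, and the fixed quantity $\int_S|k|^2$. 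Finally for $I_3$, the integrand is supported on the disks $\{|z_i|\leq\delta\}$, whose total area tends to zero, and is dominated by the integrable function $|(dW)^N|^2 + |\langle k,W\rangle|^2$, so dominated convergence gives $I_3\to 0$.

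The main technical point is confirming that $\int_S|k|^2$ is finite, since without this the bound on $\|(dW)^N\|_{L^2}$ would not be expressible purely in terms of the allowed parameters. This I would handle via the Gauss equation $|k|^2 = -2K$, which is valid on $S-B$ for any minimal surface in $\R^n$, together with Gauss--Bonnet applied to the pullback metric $h^{*}ds^2_{\R^n}$: this metric has a conical singularity of angle $2\pi(m_i+1)$ at each branch point $p_i$ of order $m_i$, so its total curvature is $2\pi\chi(S) + 2\pi\sum_i m_i$, in particular finite. Feeding this into the three estimates produces the desired constant $d(\epsilon, Q(W), \sup|W|)$.
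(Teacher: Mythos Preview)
Your argument is correct and follows essentially the same route as the paper: expand via the Leibniz rule, separate the $|d\eta_\delta|^2|W|^2$ term (controlled by the log-cutoff energy estimate), the cross term (controlled by Cauchy--Schwarz), and the $(\eta_\delta^2-1)$ term (supported on shrinking disks). Your presentation is in fact cleaner than the paper's, and you add the explicit justification via Gauss--Bonnet that $\int_S|k|^2<\infty$, which the paper uses implicitly.

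One small caveat worth flagging: your bound on $I_3$ via dominated convergence gives $I_3\to 0$, but the \emph{rate} depends on how $|(dW)^N|^2$ is distributed near $B$, not merely on $Q(W)$ and $\sup|W|$. The paper has the same slippage (its ``$O(\delta^2)$'' implicitly uses $\sup|dW|$). This does not affect the application in Proposition~\ref{sminusb}, where one works on a compact family in a finite-dimensional subspace, so all such auxiliary quantities are uniformly bounded.
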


\begin{proof}
For $\delta$ to be determined, we compute $Q(\eta_\delta W)$. We first treat the normal term in the formula (\ref{2v}) applied to the variation $\eta_\delta W$:
\begin{align*}
    \int_\Sigma |(d(\eta_\delta W))^N|^2 &= \int_\Sigma|\eta_\delta(d W)^N + (W d\eta_\delta)^N|^2 \\
    &= \int_{\delta^2\leq |z|\leq \delta} |\eta_\delta(d W)^N + (Wd\eta_\delta)^N|^2+\int_{|z|\geq \delta}|(d W)^N |^2,
\end{align*}
where $W d\eta_\delta$ is the $\mathbb{R}^n$-valued $1$-form $W\otimes d\eta_\delta.$ Hence,
\begin{align*}
    |Q(\eta_\delta W) - Q(W)|  &\leq \int_{\delta^2\leq |z|\leq \delta} |\eta_\delta(d W)^N + (Wd\eta_\delta)^N|^2+\int_{|z|\leq \delta}|(d W)^N |^2+\int_S (1-\eta_\delta^2)|W|^2|k|^2  \\
    &= \int_{\delta^2\leq |z|\leq \delta} |\eta_\delta(d W)^N + (Wd\eta_\delta)^N|^2+ O(\delta^2),
\end{align*}
since $1-\eta_\delta^2$ is supported in $|z|\leq \delta.$ By Cauchy-Schwarz and (\ref{eqn: cutoff}),
\begin{align*}
 |Q(\eta_\delta W) - Q(W)|  &\leq  \int_{\delta^2\leq |z|\leq \delta} |\eta_\delta(d W)^N|^2 + |(Wd\eta_\delta)^N|^2 + 2|\eta_\delta(d W)^N|^2|(W d\eta_\delta)^N|^2 +O(\delta^2)\\
    &= O(\delta^2)+ O\Big (\frac{1}{\log \delta^{-1}}\Big )=O\Big (\frac{1}{\log \delta^{-1}}\Big ).
\end{align*}
Thus, we can choose $\delta>0$ so that the difference of second variations is at most $\epsilon.$
\end{proof}
\end{subsection}

A consequence is that we can speak without ambiguity of the index of $Q$.

\begin{prop}\label{sminusb} The index of $Q$ on the space of all normal variations is equal to the index of $Q$ on the subspace of normal variations supported in $S - B$.
\end{prop}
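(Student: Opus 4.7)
The plan is to prove the two inequalities separately. The inequality $\mathrm{ind}(Q|_{\text{variations supported in } S-B}) \leq \mathrm{ind}(Q)$ is automatic since restriction to a subspace cannot increase the index. For the reverse, the natural idea is to use the cut-off operator $T_\delta(W) := \eta_\delta W$ to convert any finite-dimensional negative-definite subspace into one supported in $S-B$, and to quote Lemma \ref{cftrick} to show that for $\delta$ small enough, negative definiteness is preserved.

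First I would set up the compactness argument. Suppose $V$ is a $k$-dimensional subspace of normal variations on which $Q$ is negative definite. Fix any norm on $V$; since $V$ is finite-dimensional, its unit sphere $\mathbb{S}(V)$ is compact. The continuous quantities $W \mapsto -Q(W)$, $W \mapsto \sup|W|$, and $W \mapsto \sup|dW|$ are bounded on $\mathbb{S}(V)$, and $-Q$ is moreover bounded below by some $c > 0$. Tracing through the proof of Lemma \ref{cftrick}, the constant $d(\epsilon, Q(W), \sup|W|)$ in the statement is controlled by $\sup|W|$, $\sup|dW|$, and the fixed quantity $\sup|k|$, so one can choose a single $\delta_0 > 0$ for which the estimate $|Q(\eta_\delta W) - Q(W)| < c/2$ holds simultaneously for all $\delta < \delta_0$ and all $W \in \mathbb{S}(V)$.

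Second, I would argue that $T_\delta : V \to \{\text{normal variations supported in } S-B\}$ is injective for any $\delta > 0$. Indeed, $\eta_\delta \equiv 1$ on the open dense set $S \setminus \bigcup_i \{|z_i| \leq \delta\}$, so $\eta_\delta W = 0$ forces $W = 0$ on this open set, and by smoothness $W = 0$ on all of $S$. Thus $T_\delta V$ is a $k$-dimensional subspace of normal variations supported in $S - B$, and the inequality $Q(\eta_\delta W) < -c/2$ for $W \in \mathbb{S}(V)$ combined with homogeneity of $Q$ shows $Q$ is negative definite on $T_\delta V$. Hence the index of $Q$ restricted to $S-B$-supported variations is at least $k$, and taking the supremum over $k$ yields the desired equality.

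The only real obstacle is ensuring the uniformity of the cut-off estimate over the unit sphere of $V$, which is why the finite dimensionality of $V$ is essential. Once one has extracted from the proof of Lemma \ref{cftrick} that the dependence on $W$ factors through bounded seminorms (e.g.\ $\sup|W|$ and $\sup|dW|$), the compactness of $\mathbb{S}(V)$ delivers a uniform $\delta_0$ and the argument closes.
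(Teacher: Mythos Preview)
Your approach is essentially the same as the paper's: take a $k$-dimensional negative-definite subspace $V$, use compactness of the unit sphere together with Lemma \ref{cftrick} to get a uniform $\delta$, and then show that $W\mapsto \eta_\delta W$ is injective on $V$. The paper is terser about the uniformity step, but the underlying argument is identical.

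There is one genuine slip in your injectivity argument. You write that $\eta_\delta W = 0$ forces $W=0$ on the open set $\{\eta_\delta = 1\}$, and then ``by smoothness $W=0$ on all of $S$.'' That last inference is false: smooth sections can vanish on an open set without being identically zero (bump functions). Also, the set $\{\eta_\delta = 1\}$ is open but not dense---its complement is a union of closed disks of radius $\delta$. So as stated, your injectivity argument does not close.

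The fix is already implicit in what you have proved. You have shown $Q(\eta_\delta W) < -c/2 < 0$ for every $W$ on the unit sphere of $V$; since $Q(0)=0$, this immediately gives $\eta_\delta W \neq 0$ for all nonzero $W\in V$, hence $T_\delta|_V$ is injective. This is exactly how the paper argues it, and it avoids any appeal to unique continuation.
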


\begin{proof} We just need to show that if there is a $k$-dimensional space of normal variations on which $Q$ is negative definite, then there is another $k$-dimensional space of normal variations supported in $S-B$ on which $Q$ is still negative definite. Let $V$ be a $k$-dimensional space of normal variations on which $Q$ is negative definite. Let $S(V)$ be the unit sphere in $V$ with respect to any metric on $V$. Then for $\delta$ small enough, $Q(\eta_\delta W) < 0$ for every $W \in S(V)$. Since this implies $\eta_\delta W \neq 0$, the space $\{\eta_\delta W | W \in V\}$ is a $k$-dimensional subspace of normal variations supported in $S-B$ on which $Q$ is negative definite.
\end{proof}

We may now finish the proof of Theorem C.

\begin{proof}[Proof of Theorem C (2)]

Let $k$ be the index of $\mathbf{E}_\chi$, and let $W \subset T_S \mathbf{T}_g$ be a $k$-dimensional subspace on which the second variation is negative definite. By the implicit function theorem, the unique harmonic $1$-form in a given cohomology class varies smoothly with the conformal structure of $S$. We can integrate this smoothly-varying $1$-form to give a smooth equivariant variation of the harmonic map $h$. Projecting the variation onto the normal bundle, we get from $W$ a vector space of normal variations of $h$ on which the second derivative of $\mathbf{E}_\chi$ is equal to $Q$. Since it is assumed to be positive definite, this space is still $k$ dimensional. 

By Proposition \ref{sminusb}, we can replace this with a $k$-dimensional subspace of normal variations supported on $S-B$ on which $Q$ is still negative definite. Then by Lemma \ref{liftingtt}, there is a $k$-dimensional subspace of $T_S\mathbf{T}_g \times H_c^0(S - B, TS)^n$ on which $\mathbf{L}$ is negative definite, and so the index of $\mathbf{E}_\rho$ by self-maps is at least $k$.

In the other direction, suppose $W'$ is a $k$-dimensional subspace of $T_S\mathbf{T}_g \times H^0(S,TS)^n$ on which $\mathbf{L}$ is negative definite. Since $\mathbf{L}$ is positive semidefinite on $\{0\} \times H^0(S,TS)^n$, the projection of $W'$ to $T_S\mathbf{T}_g$ is still $k$-dimensional. For maps to manifolds, the positive semidefinite property follows from the computation \cite[Theorem H]{Har}, and we get the same result in our setting by repeating the computation but using the measurable energy density with the characterization (\ref{mease}). Since $\mathbf{E}_\rho$ is an infimum over all maps, we get an upper bound for $\mathbf{E}_\rho$ near $S$ by a smooth function with negative definite Hessian at $S$. Recall that Proposition \ref{c1} says that $\mathbf{E}_\chi \leq \mathbf{E}_\rho$, and so the index of $\mathbf{E}_\chi$ at $S$ is at least $k$.
\end{proof}

\end{section}

\begin{section}{The general case}
In this section, we generalize Theorem C to the situation in which the quadratic differentials are not necessarily squares of abelian differentials. We then specialize to dimension $3$ and give the proof of Theorem D.
\begin{subsection}{The spectral curve}
Let $S_0$ be a point of $\mathbf{T}_g$, and let $\phi_1, \ldots, \phi_n$ be nonzero holomorphic quadratic differentials on $S_0$ summing to zero. To this data, there is an associated spectral curve. By this, we mean a branched covering $S$ of $S_0$ and abelian differentials $\alpha_i$ on $S$ that square to the pullback of $\phi_i$, which is terminal in the sense that if the $\alpha_i$ lift to squares on some other branched cover $R\to S_0$, then this factors through a branched covering from $R\to S$. It is always a $2^n$-fold branched covering of $S_0$, but may be disconnected, for instance if any $\phi_i$ is already a square. By universality, $S$ has $n$ holomorphic involutions $\tau_i$, each of which negates $\alpha_i$ and fixes $\alpha_j$ for $j \neq i$.

We let $\rho$ be the action of $\pi_1(S_0)$ on the product $X$ of the $\R$-trees $(T_i,2d_i)$ corresponding to the quadratic differentials $\phi_i$, and $\pi$ the canonical equivariant map from $\tilde{S}_0$ to $X$. 

Since $S$ has $n$ abelian differentials whose squares sum to zero, the theory of the previous section applies. For instance, we can integrate $\mathrm{Re}(\tilde{\alpha}_i)$ on a simply connected covering space to get a harmonic map $h$ to $\R^n$, equivariant under a representation $\chi$ of the Deck group, and well defined up to a constant on each component of $S$. The energy density of this map descends not only to $S$, but all the way to $S_0$, where it is equal to the energy density of $\pi$.

In the spirit of Proposition \ref{sminusb}, we want to compare the index of $\mathbf{E}_\rho$ through self-maps at $S_0$ to the index of the quadratic form $Q$ associated to $h$. But to get the right comparison, we need to restrict $Q$ to a subspace of the space of normal variations. Let $G \cong (\Z/2\Z)^n$ be the group generated by the $\tau_i$. Let $\sigma$ be the action of $G$ on $\R^n$ such that each $\tau_i$ acts by reflection in the $i$th coordinate hyperplane. Let $NV^\sigma$ be the space of normal variations of $h$ that are $\sigma$-equivariant. 

\begin{prop} The index of $\mathbf{E}_\rho$ by self-maps is equal to the index of $Q$ on $NV^\sigma$.
\end{prop}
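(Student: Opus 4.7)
The plan is to $G$-equivariantly adapt the proof of Theorem C(2), proving each direction of the equality separately.

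For the direction that the self-maps index of $\mathbf{E}_\rho$ at $S_0$ is at least the index of $Q$ on $NV^\sigma$, I would mimic Lemma \ref{liftingtt}. Given $W \in NV^\sigma$, I define $V_i = W^i \nabla x^i / |\nabla x^i|^2$ on $S - B$ exactly as before. The key point is that each $V_i$ is $G$-invariant: for $j \neq i$, $\tau_j$ fixes both $W^i$ (by $\sigma$-equivariance of $W$) and $x^i$; for $j = i$, $\tau_i$ negates both, so the ratio is preserved. Hence each $V_i$ descends to a vector field on $S_0$. A $G$-equivariant version of Lemma \ref{atoe} then produces a $G$-equivariant conformal structure variation $S_r$ of $S$ that projects to $S_{0,r}$ tangent to some $\mu \in T_{S_0}\mathbf{T}_{g_0}$, and the computation of Lemma \ref{liftingtt} yields $\mathbf{L}(\mu, V_1, \ldots, V_n) = Q(W)$. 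Since the assignment $W \mapsto (\mu, V_1, \ldots, V_n)$ is linear and injective, a $k$-dimensional negative definite subspace for $Q$ on $NV^\sigma$ produces one for $\mathbf{L}$. The reduction to $W$ compactly supported on $S - B$ goes through a $G$-invariant log cut-off, which exists because $B$ is $G$-invariant.

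For the opposite direction, I would first establish a $G$-equivariant analog of Proposition \ref{c1}: on the $G$-fixed Teichmuller space of $S$ (identified with $\mathbf{T}_{g_0}$), $\mathbf{E}_\chi \leq 2^n \mathbf{E}_\rho$ with equality at $S_0$. For any $G$-equivariant conformal structure $S_r$ on $S$ covering $S_{0,r}$, the map $F \circ \tilde{\pi}_r : \tilde{S}_r \to \R^n$ is a $\chi$- and $\sigma$-equivariant competitor whose energy density equals that of $\pi_r$ by isometric folding (Lemma \ref{isomf}); since $S \to S_0$ has degree $|G| = 2^n$, its total energy is $2^n \mathbf{E}_\rho(S_{0,r})$, and minimality of harmonic maps gives the bound. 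Now given a $k$-dimensional negative definite subspace $W' \subset T_{S_0}\mathbf{T}_{g_0} \times H^0(S_0, TS_0)^n$ for $\mathbf{L}$, positive semidefiniteness of $\mathbf{L}$ on $\{0\} \times H^0(S_0, TS_0)^n$ yields an injective projection to a $k$-dimensional $W \subset T_{S_0}\mathbf{T}_{g_0}$. On each direction in $W$, the upper bound $\mathbf{E}_\rho(S_{0,r}) \leq \mathcal{E}_\rho(S_{0,r}, \pi \circ \tilde{f}_r)$ has negative second derivative at $r = 0$, so the Hessian of $\mathbf{E}_\rho$ is negative definite on $W$; composing with the $G$-equivariant Proposition \ref{c1} gives a smooth upper bound for $\mathbf{E}_\chi$ on $W$ with negative definite Hessian. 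A $G$-equivariant area-energy identification, obtained by restricting the standard one (Theorem 32 of \cite{Law}) to $G$-fixed parts using $G$-invariance of both quadratic forms, finally produces a $k$-dimensional negative definite subspace for $Q$ on $NV^\sigma$.

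The main obstacles I anticipate are justifying the $G$-equivariant version of Lemma \ref{atoe} (requiring the new conformal structure variation to be $G$-equivariant, which should follow from $\sigma$-equivariance of $h_r$ by construction) and the $G$-equivariant area-energy identification (which should follow from the $G$-invariance of the standard identification). Both are natural generalizations of the Section 4 tools, but require careful verification of compatibility with the $G$-action.
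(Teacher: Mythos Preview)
Your first direction is essentially correct and matches the paper's argument: the $G$-invariance of $V_i = W^i\nabla x^i/|\nabla x^i|^2$, the descent to $S_0$, the $G$-invariant log cutoff, and the $G$-equivariant conformal structure are all handled the same way.

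The second direction has a genuine gap. Your argument invokes the folding map $F:T_i\to\R$ and Lemma \ref{isomf}, but in the setting of this proposition the folding map need not exist. In Section 4 the map $F_i$ was constructed from the fact that $h_i=\mathrm{Re}\int\tilde\alpha_i$ is constant on vertical leaves of $\tilde\phi_i$ on $\tilde S_0$; this used that $\phi_i=\alpha_i^2$ already on $S_0$. Here $\phi_i$ is only a square after passing to the spectral curve $S$, and the map $h_i$ lives on $\tilde S$, not $\tilde S_0$. Over a single vertical leaf in $\tilde S_0$ there are multiple sheets in $\tilde S$ on which the sign of $\alpha_i$ differs, so $h_i$ is not constant on the preimage and does not descend to $T_i$. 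The paper's proof explicitly flags this: ``Even though the tree $T_i$ no longer folds to $\R$\ldots''. Consequently your $G$-equivariant Proposition \ref{c1} cannot be obtained by the route you propose, and the chain $\mathbf{E}_\chi\leq 2^n\mathbf{E}_\rho$ is unjustified.

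The paper replaces the folding argument with a direct pointwise energy-density comparison. Given a $k$-dimensional subspace on which $\mathbf{L}$ is negative definite, it first applies a log cutoff on $S_0$ so that each $V_i$ is supported on $S_0-B$ (this is needed so that $V_i$ lifts smoothly through the branching to a $G$-invariant vector field $\hat V_i$ on $S$). Then the variation $h_i^r=h_i\circ\hat f_i^r$ of $h$ is $\sigma$-equivariant by construction, and the crucial observation is that the energy density of $\pi_i\circ f_i^r$ at $p_0\in S_0$ equals the energy density of $h_i\circ\hat f_i^r$ at any lift $p\in S$: both are $|(f_i^r)^*\mathrm{Re}(\alpha_i)|^2$. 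This pointwise identity plays the role that folding played in Section 4, and it holds without any global map $T_i\to\R$. Taking normal components and using that energy dominates area then produces the required $k$-dimensional subspace of $NV^\sigma$.
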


\begin{proof}
Let $k$ be the index of $Q$ on $NV^\sigma$. The first thing we want to do is use Proposition \ref{sminusb} to find a $k$-dimensional space of $\sigma$-equivariant normal variations on $S-B$ on which $Q$ is still negative definite. This works fine if we choose our cutoff function $\eta_\delta$ to be $\tau_i$-invariant. For instance, we can define $\eta_\delta$ to be the pull-back to $S$ of the similarly-defined function on $S_0$; then the dependence of the energy of $\eta_\delta$ with $\delta$ is the same up to a factor of 2 coming from the relation $\log(|\sqrt{z}|) = \log(|z|)/2$.

Next, for every $W$ in this space, we get $n$ tangential vector fields $V_i = W^i \nabla x^i / |\nabla x^i|^2$ on $S$, as in Proposition \ref{liftingtt}. Since both $W^i$ and $dx^i$ transform the same way under each $\tau_j$, we have $\tau_j(V_i) = \pm W^i (\pm \nabla x^i) / |\nabla x^i|^2$, where each sign is $+$ if $i \neq j$ and $-$ if $i=j$. Hence each $V_i$ descends to a vector field on $S_0 - B$, which we still call $V_i$. 

For each $i$, let $f_i^W:\mathbb{R}\times S\to S$, $f_i^W(r,\cdot)=f_i^r(\cdot)$ be the flow of $V_i$. Let $h_i$ be the component functions of $h$, $h_i^r=h_i\circ f_i^r$, and $h_r=(h_1^r,\dots, h_n^r).$ The conformal structures $S_r$ for which each $h_r$ is conformal are still $G$-invariant, hence descend to conformal structures $(S_0)_r$ on $\Sigma_g$. Let $\pi^r_i = \pi \circ f_i^r$. Even though the tree $T_i$ no longer folds to $\R$, the energy density of $\pi^r_i$ on $(S_0)_r$ is still pointwise equal to the energy density of $h^r_i$ on $S^t$; indeed, both are equal to $|(f_i^r)^*\mathrm{Re}(\alpha_i)|^2$. Therefore, the second derivative of $\mathcal{E}_\rho$ is equal to $Q$ on this $k$-dimensional space so the index of $\mathbf{E}_\rho$ by self-maps is at least $k$.

The other inequality is easier. If the index of $\mathbf{E}_\rho$ by self-maps is $k$, then we can use the log-cutoff trick to find a $k$-dimensional space of vector fields $V_i$ supported on $S_0-B$ and variations $\mu_i$ of conformal structure on which $L$ is negative definite. Lifting everything to $S$ and differentiating the coordinate functions, we get a $k$-dimensional space of equivariant variations of $h$ for which the second derivative of energy is negative definite. Taking the normal components of these variations, and using that energy dominates area, we get a $k$-dimensional subspace of $NV^\sigma$ on which $Q$ is negative definite.
\end{proof}

\end{subsection}

\begin{subsection}{Unstable minimal surfaces in $\R^n$}

In order to finish the proof of Theorem A, we need to construct for each $g \geq 2$ and $n \geq 3$, either an unstable equivariant minimal surface $S$ of genus $g$ in $\R^n$, or a surface $S_0$ of genus $g$ whose spectral curve is a $(\Z/2\Z)^n$-equivariantly unstable minimal surface in $\R^n$.

If $g \geq 3$, then as we discuss in the next section, there are plenty of equivariant minimal surfaces of genus $g$ in $\R^n$. They are not always unstable; for instance, if the minimal map is holomorphic with respect to some complex structure on a linear subspace of $\R^n$, then it is calibrated by the K{\"a}hler form, and hence stable. In general, it is not straightforward to decide if a minimal surface in a flat space is unstable.

A special case is when the equivariant minimal surface is contained in a real 2-plane, and hence is stable. We call such a minimal surface flat. These at least are easy to identify. 

\begin{prop} \label{k=0} Let $\phi_1,\dots,\phi_n\in QD(S_0)$ sum to $0$, giving a $\chi$-equivariant map $h:\tilde{S}\to\mathbb{R}^n$ as before. The vector valued second fundamental form $k$ of $h(\tilde{S})$ vanishes identically if and only if the quadratic differentials $\phi_i$ are all complex multiples of one another.
\end{prop}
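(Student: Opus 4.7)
The plan is to reduce the proposition to the classical fact that a minimal immersion into $\mathbb{R}^n$ has identically vanishing second fundamental form precisely when its image is contained in an affine $2$-plane. Granting this, the proposition becomes the assertion that $h(\tilde{S})$ lies in an affine $2$-plane of $\mathbb{R}^n$ if and only if the $\phi_i$ are complex-colinear in $\QD(S_0)$.

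For the forward direction, I would suppose $\phi_i = c_i \phi$ for a common nonzero $\phi \in \QD(S_0)$ and constants $c_i \in \mathbb{C}$. Pulled back to the spectral curve $S$, $\phi$ admits a holomorphic square root $\alpha$, and the relations $\alpha_i^2 = c_i \alpha^2$ force $\alpha_i = d_i \alpha$ for constants $d_i \in \mathbb{C}$ with $d_i^2 = c_i$. Setting $F = \int \tilde{\alpha}$ on $\tilde{S}$ one computes
\[
h_i = \Real(d_i F) = \Real(d_i)\, \Real(F) - \Ima(d_i)\, \Ima(F),
\]
so $h(\tilde{S})$ is contained in the $2$-plane of $\mathbb{R}^n$ spanned by the real vectors $(\Real d_i)_i$ and $-(\Ima d_i)_i$, and hence $k \equiv 0$.

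For the converse, assume $k \equiv 0$. By the cited fact, $h(\tilde{S})$ lies in some affine $2$-plane $P \subset \mathbb{R}^n$, and after an ambient affine isometry realized by an orthogonal $R \in O(n)$, I may take $P = \mathbb{R}^2 \times \{0\}$. Setting $\alpha'_i := \sum_j R_{ij} \alpha_j$, the rotated components $h'_i$ are constant for $i \geq 3$, hence $\alpha'_i = 0$ for such $i$. Because the relation $\sum_j \alpha_j^2 = 0$ is $O(n)$-invariant, it reduces to $(\alpha'_1)^2 + (\alpha'_2)^2 = 0$, giving $\alpha'_2 = \pm i\,\alpha'_1$. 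Inverting, $\alpha_j = R_{1j}\,\alpha'_1 + R_{2j}\,\alpha'_2 = (R_{1j} \pm i R_{2j})\,\alpha'_1$, so
\[
\phi_j = (R_{1j} \pm i R_{2j})^2 (\alpha'_1)^2.
\]
The quadratic differential $(\alpha'_1)^2$ is a complex multiple of any $\phi_j$ with $R_{1j} \pm iR_{2j} \neq 0$, hence $G$-invariant, so it descends to $S_0$ as a common complex-linear factor of all the $\phi_j$'s.

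The principal obstacle is justifying the initial reduction: that a minimal branched immersion $h$ with $k \equiv 0$ on its regular part has image contained in a single affine $2$-plane. Away from the branch locus this is classical, since the Gauss map of a minimal immersion has differential controlled by $k$, forcing a constant tangent plane when $k$ vanishes, after which minimality pins the image into one affine $2$-plane. The conclusion extends across the isolated branch points of $h$ by continuity. If the spectral curve is disconnected, the argument runs independently on each component, and invoking the transitive $G$-action together with the fact that the $\phi_j$ all descend to $S_0$ forces the complex-linear relations to be consistent across components.
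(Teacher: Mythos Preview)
Your proof is correct and takes essentially the same approach as the paper: both directions reduce to the classical equivalence between $k\equiv 0$ and the image lying in an affine $2$-plane, and the converse then uses weak conformality ($\sum\alpha_i^2=0$) to force the $\alpha_i$ into a single complex line. The paper argues this last step slightly more intrinsically---the isotropic cone in the complexification of a real $2$-plane is a pair of complex lines, so $h_z$ lands in one of them by analyticity---whereas you make it concrete via an orthogonal rotation; one minor caveat is that your claim that $(\alpha_1')^2$ is $G$-invariant need not hold literally when the sign in $\alpha_2'=\pm i\alpha_1'$ varies across components of $S$, but as your closing paragraph already indicates, it suffices to work on a single component and observe that the resulting proportionality $\phi_j=c_j\phi_k$ is a relation between differentials on $S_0$ itself.
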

\begin{proof} Let $h_1,\dots, h_n$ denote the coordinate functions of $h$. Since $\phi_i = ((h_i)_z)^2dz^2$, the quadratic differentials are all complex multiples of one another if and only if the functions $(h_i)_z$ are. In one direction, assume $(h_i)_z = a_i f(z)$ for some function $f(z)$ and some complex constants $a_i$. Then the image of the $\mathbb{R}^n$-valued $1$-form $dh$ is contained in a two-dimensional subspace, and by integrating, we see that image of $h$ is contained in an affine subspace of $\mathbb{R}^n$. In particular, it is totally geodesic, so the second fundamental form is zero. Conversely, if the second fundamental form is zero, then the image of $dh$ is contained in some two-dimensional linear subspace, and so the image of $h_z$ is contained in the complexification of that subspace, which is two-dimensional. As $h$ is weakly conformal, $\langle h_z, h_z \rangle = 0$; since the inner product is nondegenerate on the complexification of any real two-dimensional subspace, this shows that $h_z$ is contained in a complex line (we use analyticity to deduce this as well at the branch points), and so the functions $(h_i)_z$ are all complex multiples of one another.
\end{proof}

For the remaining section, we restrict to $n=3$. For $n\geq 3$, any isometric inclusion of $\R^3$ into $\R^n$ gives examples in $\mathbb{R}^n$. Let $\mathbf{M}_g$ be the moduli space of Riemann surfaces of genus $g$, and let $E^n$ be the total space of the bundle over $\mathbf{M}_g$ consisting of $n$-tuples of quadratic differentials that sum to $0$. Instability of the corresponding equivariant minimal surfaces in $\mathbb{R}^n$ is an open condition on $E^n$, so by perturbing the $3$-dimensional examples we get many more.

\end{subsection}

\subsection{Equivariant minimal surfaces in $\R^3$}\label{5.3}
Every non-flat equivariant minimal surface in $\R^3$ is unstable. Indeed, in dimension 3, the expression $|\langle k, W \rangle|^2$ in the formula for $Q(W)$ is equal to $2|K||W|^2$, where $K$ is the Gauss curvature of the equivariant minimal surface. The normal bundle to the minimal surface $h(\tilde{S})$ is a real line bundle on $S$. Since $S$ is always orientable, the normal bundle is as well, and hence it is equivariantly trivial. If $N$ is a unit normal section, and $\eta$ is any function on $S$, then the second variation formula (\ref{2v}) takes the form $$Q_0(\eta N)=\int_\Sigma |\nabla \eta|^2 - |K|^2|\eta|^2.$$ As long as the curvature $K$ is anywhere nonzero, a constant section of $N$ will therefore be destabilizing: for $\eta=1,$ 
\begin{equation*}
    Q_0(N)=\int_\Sigma - |K|^2<0.
\end{equation*}

When $g \geq 3$, the moduli space of $(S, \alpha_1, \alpha_2, \alpha_3)$, where $S$ is a Riemann surface of genus $g$ and $\alpha_i$ are abelian differentials on $S$ whose squares sum to zero, but are not all mutliples of one another, is nonempty and has complex dimension $3g$ (in \cite[Section 6]{Petri} it is shown that the quotient by the natural free actions of $\mathbb{C}^*$ and $\textrm{SO}(3,\mathbb{C})$ has dimension $3g-4$).  This proves Theorem A for $g \geq 3$.

\begin{remark}
In fact, in \cite[Theorem 16]{Ros}, Ros proves that every non-flat minimally immersed surface of genus $g$ in a 3-torus has index at least $2g/3 - 1$. The result easily generalizes to any non-flat equivariant minimal immersion for any representation, but we emphasize that it applies only to immersed surfaces. 
\end{remark}

Unfortunately, there are no non-flat equivariant minimal surfaces of genus 2 in $\R^3$, stable or not. This is because the canonical map lands in $\mathbb{P}^1$, so the canonical curve cannot be contained in a rank 3 quadric (or see the comment after Proposition \ref{g=2}). Hence, we are forced to study $\sigma$-equivariant deformations of the spectral curve. The key that makes this work is that the normal bundle of the spectral curve $S$ of $(S_0, \phi_1, \phi_2, \phi_3)$ can be equivariantly trivial even if the $\phi_i$ are not squares (in which case $S$ is just $8$ copies of $S_0$).

\begin{prop} Suppose that the sextic differential $\phi_1\phi_2\phi_3$ is the square of a cubic differential $c$. Then there is a $\sigma$-equivariant deformation of $S$ of constant length 1.
\end{prop}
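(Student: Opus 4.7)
The plan is to produce an explicit $\sigma$-equivariant unit normal section to the minimal immersion $h:\tilde S\to\R^3$ by combining the unit normal $N$ with a sign function constructed from the cubic differential $c$. The point is that $N$ is not itself $\sigma$-equivariant: as we will see, it transforms by the twist $\sigma\otimes\epsilon$, where $\epsilon:G\to\{\pm1\}$ is the character sending each generator $\tau_i$ to $-1$. So what is needed is a unit-modulus function $\eta$ on $S$ satisfying $\tau_i^*\eta = -\eta$ for each $i$; the hypothesis will provide exactly such an $\eta$.

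First I would verify the transformation law for $N$. Writing $\alpha_i = a_i\,dz$ in a local holomorphic coordinate, $N$ is proportional to the cross product $\mathrm{Re}(a)\times\mathrm{Im}(a)$, whose $j$-th coordinate is $\mathrm{Im}(\bar a_{j+1}a_{j+2})$. The relations $\tau_i^*\alpha_i = -\alpha_i$ and $\tau_i^*\alpha_j = \alpha_j$ for $j\neq i$ then yield, by direct computation, $\tau_i^* N = -\sigma(\tau_i)\,N$. (The vector $N$ extends smoothly across the branch locus $B\subset S$ of $h$ via the holomorphic extension of the Gauss map to $\mathbb{CP}^2$ noted earlier in this section, and the transformation law extends by continuity.)

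Next I would construct $\eta$ from the hypothesis. Since $\alpha_i^2 = \pi^*\phi_i$, we have
\[
(\alpha_1\alpha_2\alpha_3)^2 \;=\; \pi^*(\phi_1\phi_2\phi_3) \;=\; \pi^*(c^2) \;=\; (\pi^*c)^2
\]
as sections of $K_S^6$. Both $\alpha_1\alpha_2\alpha_3$ and $\pi^*c$ are sections of $K_S^3$ with the same divisor of zeros, so their ratio $\eta := (\alpha_1\alpha_2\alpha_3)/(\pi^*c)$ extends to a locally constant $\pm1$-valued function on all of $S$. Because $\tau_i$ negates $\alpha_1\alpha_2\alpha_3$ while fixing $\pi^*c$, we obtain $\tau_i^*\eta = -\eta$, which is exactly the sign twist we wanted.

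Finally, setting $W := \eta\,N$, the identity $\tau_i^*W = (\tau_i^*\eta)(\tau_i^*N) = (-\eta)(-\sigma(\tau_i)N) = \sigma(\tau_i)W$ shows $W$ is $\sigma$-equivariant, and $|W|\equiv 1$ is immediate. I expect the only substantive step to be the careful verification of the transformation law $\tau_i^*N = -\sigma(\tau_i)N$; everything else, most importantly the algebraic step that extracts the sign $\eta$ from the hypothesis $\phi_1\phi_2\phi_3 = c^2$, is essentially formal.
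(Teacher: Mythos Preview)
Your proof is correct and is essentially the paper's argument made explicit. The paper phrases it geometrically---$c$ splits $S$ into two halves interchanged by each $\tau_i$, and the index-two subgroup $\ker\epsilon$ acts on $\R^3$ orientation-preservingly, so the normal bundle is equivariantly oriented---while your sign function $\eta=(\alpha_1\alpha_2\alpha_3)/(\pi^*c)$ is precisely the indicator of those two halves and your transformation law $\tau_i^*N=-\sigma(\tau_i)N$ is the computational form of the orientation observation; the resulting section $W=\eta N$ is the same one the paper produces.
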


\begin{proof}
The cubic differential $c$ distinguishes two components of $S$; one on which $\alpha_1\alpha_2\alpha_3 = c$, and one on which it is equal to $-c$. Each $\tau_i$ interchanges the two components of $S$. The subgroup $\Gamma<(\Z/2\Z)^3$ preserving the components acts on $\R^3$ in an orientation-preserving way. Indeed, for each element $\gamma\in \Gamma$, the determinant of the matrix describing the product of hyperplane reflections is equal to the product of the monodromies of the $\alpha_i$ under the action of $\gamma$. We can use the orientation of $\R^3$, together with the orientation of the component of $S$, to equivariantly orient the normal bundle. Since the normal bundle is a line bundle, it therefore has an equivariant section of constant length. 
\end{proof}

\begin{remark} \label{abeliancubic}
If each $\phi_i$ is the square of an abelian differential $\alpha_i$, then clearly $\phi_1\phi_2\phi_3 = c^2$ with $c = \alpha_1\alpha_2\alpha_3$. 
\end{remark}

If the quadratic differentials $\phi_i$ are not complex multiples of one another, then neither are their lifts $\alpha_i$ to the spectral curve. Hence, the minimal map from the lift of the spectral curve is non-flat, so any $\sigma$-equivariant deformation of constant length will be destabilizing.

The final step is to show that there are non-flat solutions even in genus 2 to the equations $\phi_1\phi_2\phi_3 = c$ and $\phi_1 + \phi_2 + \phi_3 = 0$. For any $g\geq 2$, let $\mathcal{P}_g$ be the moduli space of genus $g$ Riemann surfaces $S$ together with a triple of quadratic differentials $\phi_i$ summing to zero whose product is a square and which are not all complex multiples of one another.

\begin{prop} \label{g=2} The moduli space $\mathcal{P}_2$ has dimension $3$.
\end{prop}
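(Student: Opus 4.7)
The strategy is to realize every non-degenerate point of $\mathcal{P}_2$ via an explicit hyperelliptic construction, then count dimensions. Every genus 2 Riemann surface is hyperelliptic, so write $S:y^2=f(x)$ with $f$ a polynomial of degree 6. Every holomorphic quadratic differential is invariant under the hyperelliptic involution $\iota$ and takes the form $p(x)\,dx^2/f(x)$ for a polynomial $p$ of degree at most $2$, while the one-dimensional $\iota$-invariant subspace of $H^0(K^3)$ is spanned by the cubic $dx^3/y^2=dx^3/f$.

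The main construction is as follows. Given polynomials $p_1,p_2,p_3$ of degree at most $2$ with $p_1+p_2+p_3=0$, generic enough that $p_1p_2p_3$ has six distinct roots and $p_1,p_2$ are linearly independent, set $S=\{y^2=p_1p_2p_3\}$ (a smooth curve of genus $2$) and $\phi_i=p_i\,dx^2/(p_1p_2p_3)$. Then $\sum\phi_i=0$, and
\[
\phi_1\phi_2\phi_3=\frac{dx^6}{(p_1p_2p_3)^2}=\left(\frac{dx^3}{p_1p_2p_3}\right)^2,
\]
so the product is a square. Linear independence of $p_1,p_2$ forces $\phi_1,\phi_2,\phi_3$ not all to be complex multiples of one another. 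The space of such triples has complex dimension $9-3=6$, and two triples produce isomorphic points of $\mathcal{P}_2$ precisely when related by the $\mathrm{PGL}(2,\mathbb{C})$-action on the coordinate $x$, which is $3$-dimensional with generically trivial stabilizer. This gives a $3$-dimensional subfamily of $\mathcal{P}_2$, and hence $\dim \mathcal{P}_2\geq 3$.

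For the matching upper bound, one shows every point of $\mathcal{P}_2$ arises from the construction (up to rescaling). Since $\phi_1\phi_2\phi_3\in H^0(K^6)$ is $\iota$-invariant, writing $c=c^{+}+c^{-}$ as the decomposition into $\iota$-invariant and anti-invariant parts, the cross term $2c^{+}c^{-}$ in $c^2$ is anti-invariant and must vanish, so $c$ itself is purely invariant or purely anti-invariant. The invariant case coincides with the construction above after rescaling $y$ to normalize $c=dx^3/f$. The main technical obstacle is ruling out the anti-invariant case: there $c=q(x)\,dx^3/y^3$ with $\deg q\leq 3$, reducing the cubic-square condition to the algebraic equation $p_1p_2p_3=q^2$ in $\mathbb{C}[x]$. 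A short case analysis on how the six roots (with multiplicity) of $p_1p_2p_3$ can pair up into a perfect square, combined with $p_1+p_2+p_3=0$, shows every configuration (three distinct double roots distributed among the $p_i$, higher-multiplicity patterns, or each $p_i$ itself a square) either forces two of the $p_i$ to be proportional (whence all three are, via $\sum p_i=0$) or leads to a Vandermonde-type degeneracy collapsing two of the roots. Thus the anti-invariant case contributes nothing to $\mathcal{P}_2$, and we conclude $\dim \mathcal{P}_2=3$.
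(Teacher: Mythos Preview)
Your lower-bound construction is the paper's, differently packaged. In both, one arranges the zeros of $\phi_1,\phi_2,\phi_3$ to be exactly the six Weierstrass points, which makes $\phi_1\phi_2\phi_3$ the square of the $\iota$-invariant cubic differential $dx^3/y^2$. The paper does this by fixing three Weierstrass points at $0,1,\infty$, letting two more vary freely (with the sixth then determined by the constraint), and adding a scaling parameter; you instead take an arbitrary triple $(p_1,p_2,p_3)$ of quadratics with $\sum p_i=0$, set $f=p_1p_2p_3$, and quotient by the $\mathrm{PGL}(2,\mathbb C)$ action on $x$. The resulting $3$-dimensional families coincide, and your presentation has the advantage of making the cubic-square identity $\phi_1\phi_2\phi_3=(dx^3/f)^2$ immediate rather than something to be checked after the fact.

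Where you diverge from the paper is in proving an upper bound. The paper only exhibits a $3$-dimensional family; indeed, the subsequent Proposition~\ref{3g-3} asserts only $\dim\mathcal P_g\ge 3g-3$, so equality in genus $2$ is neither established nor needed there. Your argument---splitting $c$ into $\iota$-eigenparts, identifying the invariant case with the construction via $p_1p_2p_3=\lambda^2 f$, and eliminating the anti-invariant case---is correct in outline and a genuine addition. The one incomplete step is the anti-invariant elimination: you assert that $p_1p_2p_3=q^2$ with $\sum p_i=0$ and the $p_i$ not all proportional has no solutions, but only list the cases without checking them. The claim is true (the key observation is that a root of two of the $p_i$ is automatically a root of the third, so every root of the product has multiplicity $1$, $2$, or $\ge 3$; forcing all multiplicities even then reduces either to each $p_i$ being the square of a linear form with three distinct roots, which your Vandermonde remark handles, or to two of the $p_i$ being proportional), but as written this step is a sketch rather than a proof.
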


\begin{proof}
Consider the three dimensional family of algebraic curves $w^2 = z(z-1)(z-a)(z-b)(z-c)$ for $(0,1,a, b, c)$ distinct complex numbers. This is a finite covering of the moduli space of genus 2 Riemann surfaces. Every holomorphic quadratic differential on a curve in this family is of the form $p(z) (dz)^2/w^2$ for $p(z)$ a polynomial of degree at most 2. If the roots of $p(z)$ are branch points of the curve, then the quadratic differential vanishes to order two at the corresponding point of the curve. For arbitrary $a$ and $b$, and $c$ to be determined, let
\begin{align*}
\phi_1 &= z(z-1)\frac{dz^2}{w^2} \\ 
\phi_2 &= \mu(a,b) (z-a) \frac{dz^2}{w^2},
\end{align*}
where $\mu(a,b) = - b(b-1)/(b-a)$ is chosen so that $\phi_1 + \phi_2$ vanishes at $b$ (equivalently, that the corresponding quadratic polynomial vanishes at $b$). A short computation shows that the other root of the polynomial for $\phi_1 + \phi_2$ is at $a(b-1)/(b-a)$, so if this happens to be the value of $c$, then the sextic differential $\phi_1\phi_2\phi_3$ vanishes to order two at each of the six branch points of the curve (including $\infty$). Hence it is the square of the cubic differential $dz^3/w^2$, which vanishes to order one at each of these points. Including a parameter rescaling $\phi_1$, $\phi_2$, and $\phi_3$, this shows that $\mathcal{P}_2$ has dimension 3.
\end{proof}

For example we could take $a = -1, b = i,$ and $c = -i$ to get a solution on the hyperelliptic curve $w^2 = z^5-z$. This suffices for the proof of Theorem A.

\begin{remark} Note that the triples of quadratic differentials in genus 2 are squares of abelian differentials since the polynomials $z(z-1)$, etc., are not squares. However, they still have even order zeros.
\end{remark}

Together with \cite[Section 6]{Petri} and the Remark \ref{abeliancubic}, this shows: 

\begin{prop}\label{3g-3}
For every genus $g \geq 2$, $\mathcal{P}_g$ is nonempty and every component has complex dimension at least $3g-3$.
\end{prop}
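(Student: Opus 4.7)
The plan is to handle nonemptiness and the lower bound on dimension separately. For nonemptiness, the genus-$2$ case is immediate from Proposition \ref{g=2}, which exhibits a 3-dimensional family on the hyperelliptic curves $w^2=z(z-1)(z-a)(z-b)(z-c)$. For $g\geq 3$, I would invoke Remark \ref{abeliancubic}: given abelian differentials $\alpha_1,\alpha_2,\alpha_3$ on $S$ with $\sum_i \alpha_i^2=0$ and not all proportional, the triple $\phi_i=\alpha_i^2$ defines a point of $\mathcal{P}_g$, with $c=\alpha_1\alpha_2\alpha_3$. Existence of such triples for $g\geq 3$ is established in \cite[Section 6]{Petri}, where the moduli space of such $(S,\alpha_1,\alpha_2,\alpha_3)$ is in fact shown to have dimension $3g$.

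For the dimension lower bound, my plan is to realize $\mathcal{P}_g$ (up to a finite map and an open non-degeneracy condition) as the zero locus of a section of a vector bundle over $\mathbf{M}_g$, and then apply the generalized principal ideal theorem. Let $\mathbf{E}$ be the total space of the Hodge-type bundle over $\mathbf{M}_g$ whose fiber over $S$ is $H^0(S,K^2)\oplus H^0(S,K^2)\oplus H^0(S,K^3)$, parametrizing tuples $(S,\phi_1,\phi_2,c)$. Using $h^0(K^2)=3g-3$ and $h^0(K^3)=5g-5$ for $g\geq 2$, this bundle has rank $11g-11$, so $\dim \mathbf{E}=(3g-3)+(11g-11)=14g-14$. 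Setting $\phi_3=-\phi_1-\phi_2$, the condition $\phi_1\phi_2\phi_3=c^2$ defines the zero locus of the section
$$\sigma(S,\phi_1,\phi_2,c)=\phi_1\phi_2\phi_3-c^2$$
of the rank-$(11g-11)$ pullback to $\mathbf{E}$ of the Hodge bundle over $\mathbf{M}_g$ with fiber $H^0(S,K^6)$. By Krull's Hauptidealsatz, every irreducible component of $\sigma^{-1}(0)$ has codimension at most $11g-11$, hence dimension at least $14g-14-(11g-11)=3g-3$.

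To conclude, $\mathcal{P}_g$ is the image of the open subset of $\sigma^{-1}(0)$ on which $\phi_1,\phi_2,\phi_3$ are not all proportional, under the generically $2$-to-$1$ map $(S,\phi_1,\phi_2,c)\mapsto (S,\phi_1,\phi_2,\phi_3)$ with fibers $\{\pm c\}$, which preserves dimensions. Openness of the non-colinearity condition implies it cannot decrease the dimension of any component it meets, so every component of $\mathcal{P}_g$ inherits the bound $\geq 3g-3$. The main point requiring care is bookkeeping --- distinguishing components of $\mathcal{P}_g$ from components of $\sigma^{-1}(0)$ which might lie entirely in the degenerate locus of proportional differentials --- but this is handled by the openness argument combined with the nonemptiness input above. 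The bound is tight in genus $2$ and non-tight for $g\geq 3$, consistent with the $3g$-dimensional family furnished by the abelian construction.
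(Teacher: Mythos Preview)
Your proof is correct and follows essentially the same approach as the paper: the same dimension count $14(g-1)-11(g-1)=3(g-1)$ via a Krull-type codimension bound, combined with the previously established nonemptiness results. The packaging differs slightly---you cut out $\mathcal{P}_g$ as the zero locus of a section of the rank-$(11g-11)$ bundle $H^0(K^6)$ over the $(14g-14)$-dimensional space of tuples $(S,\phi_1,\phi_2,c)$, whereas the paper phrases it as the intersection of two subvarieties (squares of cubics, and products of three quadratics summing to zero) inside the total space of $H^0(K^6)$ over $\mathbf{M}_g$; and for nonemptiness in genus $g\geq 3$ you invoke the abelian-differential construction from \cite{Petri} directly, while the paper instead pulls back the genus-$2$ examples along unramified covers---but these are cosmetic differences.
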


We give a self-contained proof of this proposition, since it is very brief in the reference.

\begin{proof}
We have already proved this for genus 2. The canonical map of a hyperelliptic curve of genus 3 is the vanishing locus of a nondegenerate quadric on $\mathbb{CP}^2$; diagonalizing this quadric gives three abelian differentials whose squares sum to zero on the curve. By Remark \ref{abeliancubic}, these give points in $\mathcal{P}_3$. Since the hyperelliptic locus has dimension 5, we get a sixth dimension from rescaling the abelian differentials. This proves the result for $g=3$.

In general, taking unramified coverings of a point in $\mathcal{P}_2$ shows that $\mathcal{P}_g$ is nonempty for every $g$. To get the bound on dimension, we observe that $\mathcal{P}_g$ is, up to a double cover, the intersection in the total space of the bundle $H^0(K^3)$ over $\mathbf{M}_g$ (dimension $14(g-1)$) of the sextic differentials that are squares of cubic differenitals (dimension $8(g-1)$) and those that are the product of three independent quadratic differentials summing to zero (dimension $9(g-1)$). This gives a lower bound on the dimension of $8(g-1) + 9(g-1) - 14(g-1) = 3(g-1)$. 
\end{proof}

\end{section}

\bibliographystyle{plain}
\bibliography{bibliography}

\end{document}